\newtheorem{theorem}{Theorem}[section]
\newtheorem{corollary}[theorem]{Corollary}
\newtheorem{lemma}[theorem]{Lemma}
\newtheorem{proposition}[theorem]{Proposition}
\numberwithin{equation}{section}  
\begin{document}
\title{A Minkowski type inequality for manifolds with positive spectrum}
\author{Ovidiu Munteanu and Jiaping Wang}

\begin{abstract}
The classical Minkowski inequality implies that the volume of a bounded convex domain is controlled
from above by the integral of the mean curvature of its boundary. In this note,
we establish an analogous inequality without the convexity
assumption for all bounded smooth domains in a complete manifold with 
its bottom spectrum being suitably large relative to its Ricci curvature lower bound.
An immediate implication is the nonexistence of embedded compact minimal hypersurfaces in such manifolds.
This nonexistence issue is also considered for steady and expanding Ricci solitons.
\end{abstract}

\address{Department of Mathematics, University of Connecticut, Storrs, CT
06268, USA}
\email{ovidiu.munteanu@uconn.edu}
\address{School of Mathematics, University of Minnesota, Minneapolis, MN
55455, USA}
\email{jiaping@math.umn.edu}
\maketitle

\section{Introduction}

On a complete Riemannian manifold $(M, g),$ the Laplacian $\Delta$ is a self-adjoint operator according to \cite{Gaffney}. So the spectrum  
$\sigma(M)$ of $M,$ defined as the spectrum $\sigma(-\Delta)$ of $-\Delta,$ is a closed subset of $[0, \infty).$ The bottom spectrum is given by

$$
\lambda_1(M):=\min\{\lambda\in \sigma(M)\}.
$$ 
Alternatively, it is characterized as the best constant for the Poincar\'e inequality

$$
\lambda_1\,\int_M \phi^2\leq \int_M \left\vert \nabla \phi\right\vert^2
$$
for all compactly supported smooth functions $\phi$ on $M.$

A result by McKean \cite{McKean} says that $\lambda_1(M)\geq \frac{(n-1)^2}{4}$ for an $n$-dimensional, simply connected,
complete manifold $M^n$ with sectional curvature $K\leq -1$.

 The famous Sullivan-Patterson theory \cite{P, S} computes the bottom spectrum
for the quotient space $\mathbb{H}^n/\Gamma$ of the $n$-dimensional real hyperbolic space $\mathbb{H}^n,$ where
$\Gamma$ is a discrete, finitely generated, group of isometries of $\mathbb{H}^n.$ Namely,
$\lambda_1(\mathbb{H}^n/\Gamma)=\frac{(n-1)^2}{4}$ if $d_{\Gamma}\leq \frac{n-1}{2}$ and 
$\lambda_1(\mathbb{H}^n/\Gamma)=d_{\Gamma}\,(n-1-d_{\Gamma})$ when $d_{\Gamma}\geq \frac{n-1}{2},$ where $d_{\Gamma}$ is the Hausdorff
dimension of the limit set of $\Gamma,$ that is, those points $\theta$ in the ideal boundary at infinity $S_\infty(\mathbb{H}^n)$ of 
$\mathbb{H}^n$ such that $\theta=\lim_{i\to \infty} \gamma_i (x)$ for some $x\in \mathbb{H}^n$ and a sequence of 
$\gamma_i\in \Gamma$.

 Another notable result, due to Brooks \cite{Brooks}, is that $\lambda_1(M)>0$
for a covering space $M$ of a compact manifold $N$ if and only if the covering group is nonamenable.

Finally, we mention a result of Lee \cite{Lee}. Recall that a Riemannian manifold
$(M, g)$ is conformally compact if topologically it is the interior of a compact manifold $\overline{M}$ with boundary $N$ and its 
metric $g= \rho^{-2}\,g_{\overline{M}}$ 
for some metric $g_{\overline{M}}$ on $\overline{M}$ and smooth function $\rho$ on $\overline{M}$ with $\rho=0$ on $N$ and $d\rho\neq 0$ on $N.$ 
Note that different pairs of $\rho$ and $g_{\overline{M}}$ induce the same conformal class on $N.$ 

\begin{theorem}[Lee]
Let $(M^n, g)$ be a conformally compact Einstein manifold with its Ricci curvature normalized to be $-(n-1).$
If its boundary $N$ with the induced conformal metric has nonnegative scalar curvature, then $\lambda_1(M)=\frac{(n-1)^2}{4}.$ 
\end{theorem}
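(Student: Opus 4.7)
I would split the claim $\lambda_1(M)=\frac{(n-1)^2}{4}$ into matching upper and lower bounds. The upper bound is standard: since $\mathrm{Ric}(g)=-(n-1)g$, Bishop--Gromov gives volume growth of geodesic balls at most of order $e^{(n-1)r}$, and plugging radial cut-offs into the Rayleigh quotient yields $\lambda_1(M)\le \frac{(n-1)^2}{4}$ by a standard argument of Cheng.

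For the lower bound, the strategy is to produce a positive function $u\in C^2(M)$ satisfying the supersolution inequality
$$\Delta u+\frac{(n-1)^2}{4}\,u\le 0.$$
Given such $u$, any $\phi\in C_c^{\infty}(M)$ can be written $\phi=u\psi$ with $\psi=\phi/u$ compactly supported, and integration by parts yields
$$\int_M|\nabla\phi|^2-\frac{(n-1)^2}{4}\int_M\phi^2=\int_M u^2|\nabla\psi|^2-\int_M \psi^2\,u\left(\Delta u+\frac{(n-1)^2}{4}u\right)\ge 0,$$
which is precisely the Poincar\'e inequality characterizing $\lambda_1(M)\ge \frac{(n-1)^2}{4}$.

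To build $u$, I would exploit the conformal compactness. By the scalar curvature hypothesis one can pick a representative $\hat g$ of the conformal infinity with $R_{\hat g}\ge 0$; by Graham--Lee there is then a geodesic defining function $\rho$ in a collar of $N$ with $\rho^2 g=d\rho^2+h_\rho$ and $h_0=\hat g$. A direct calculation via the conformal transformation rule for the Laplacian shows that the ansatz $u=\rho^{(n-1)/2}$ satisfies, with $\bar g=\rho^2 g$,
$$\Delta_g u+\frac{(n-1)^2}{4}\,u=\frac{n-1}{2}\,\rho\,(\Delta_{\bar g}\rho)\,u,$$
so one only needs $\Delta_{\bar g}\rho\le 0$. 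The Einstein equation pins the Fefferman--Graham expansion to $h_\rho=\hat g-\rho^2\,\mathrm{Sch}(\hat g)+O(\rho^4)$, from which $\Delta_{\bar g}\rho=-\frac{\rho R_{\hat g}}{2(n-1)}+O(\rho^3)$. The hypothesis $R_{\hat g}\ge 0$ thus delivers the needed sign in a neighborhood of $N$.

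The main obstacle is globalizing this construction. The geodesic defining function exists only in a collar, whereas the supersolution inequality must hold on all of $M$. A plausible route is to replace $\rho$ by a function built from the positive solution of a conformal Laplacian Dirichlet problem on the compactification $\bar M$: the hypothesis $R_{\hat g}\ge 0$ enters to guarantee strict positivity of this solution throughout $\bar M$ via a maximum principle, while the Einstein equation controls the sign of $\Delta_{\bar g}\rho$ in the interior. Making this global step precise, and ruling out interior sign changes, is where I expect the argument to demand the most care.
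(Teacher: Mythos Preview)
The paper does not prove this theorem. It is stated in the introduction as a known result attributed to Lee \cite{Lee}, followed by the remark that ``A different proof of the result was given by X. Wang \cite{Wang}.'' There is therefore no proof in the paper to compare your attempt against.

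That said, your outline is broadly in the spirit of Lee's original argument: obtain the upper bound from Cheng's theorem (which the paper also quotes as Theorem~1.2), and for the lower bound construct a positive supersolution of $\Delta u + \frac{(n-1)^2}{4}u \le 0$ from a suitable defining function, using the Einstein condition and the sign of the boundary scalar curvature to control $\Delta_{\bar g}\rho$. You have correctly identified the genuine difficulty, namely that the geodesic defining function and the Fefferman--Graham expansion only live in a collar, so the inequality $\Delta_{\bar g}\rho \le 0$ is a priori only available near infinity. Your suggestion to replace $\rho$ by the solution of an auxiliary elliptic problem on $\bar M$ is indeed how Lee resolves this: he works with a global defining function obtained from a positive eigenfunction of the conformal Laplacian on the compactification, and the hypothesis $R_{\hat g}\ge 0$ enters precisely to ensure positivity (hence a well-defined $u=\rho^{(n-1)/2}$) and the correct sign of the relevant term throughout $M$. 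So your sketch is on the right track but, as you acknowledge, the globalization step is the heart of the matter and would need to be carried out in full.
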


A different proof of the result was given by X. Wang \cite{Wang}.

Concerning the upper bound of the bottom spectrum, we have the following classical result due to Cheng \cite{C}.

\begin{theorem}[Cheng] 
Let $M^n$ be a complete Riemannian manifold with $\mathrm{Ric}\geq -(n-1)\kappa$ for some nonnegative constant $\kappa.$ Then
$$
\lambda_1(M)\leq \lambda_1\left(\mathbb{H}^n(-\kappa)\right)=\frac{(n-1)^2}{4}\,\kappa.
$$
\end{theorem}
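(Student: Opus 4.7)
The plan is to exploit the variational characterization of $\lambda_1(M)$ via the Poincar\'e inequality together with the Bishop–Gromov volume comparison theorem. Under the assumption $\mathrm{Ric}\geq -(n-1)\kappa$, Bishop–Gromov provides the sharp upper bound
$$
\mathrm{vol}(B(p,r))\leq \mathrm{vol}_{\mathbb{H}^n(-\kappa)}(B(r))\leq C\,e^{(n-1)\sqrt{\kappa}\,r}
$$
for any fixed basepoint $p\in M$, and more precisely the area of geodesic spheres is controlled by $c\,\sinh^{n-1}(\sqrt{\kappa}\,r)$ in an almost-everywhere sense. The idea is to test the Rayleigh quotient against truncated radial functions modeled on the $L^2$-critical decay of the ground state on the model space $\mathbb{H}^n(-\kappa)$, namely $e^{-\alpha r}$ with $\alpha:=\frac{(n-1)\sqrt{\kappa}}{2}$.

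The steps I would carry out, in order, are as follows. First, I would fix $p\in M$, set $r(x):=d(p,x)$, and for $R\gg 1$ define a Lipschitz test function $\phi_R(x)=\chi_R(r(x))\,e^{-\alpha\,r(x)}$, where $\chi_R$ is a cutoff equal to $1$ on $[0,R-1]$, vanishing outside $[0,R]$, and with $|\chi_R'|\leq 2$. Since $r$ is $1$-Lipschitz, $|\nabla\phi_R|\leq |\chi_R'(r)|\,e^{-\alpha r}+\alpha\,\chi_R(r)\,e^{-\alpha r}$ almost everywhere, which is all one needs for the inequality $\lambda_1(M)\int\phi_R^2\leq \int|\nabla\phi_R|^2$. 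Second, I would convert both integrals to integrals on $(0,\infty)$ by the coarea formula, writing $\int_M F(r)\,dV=\int_0^\infty F(t)\,A(t)\,dt$ with $A(t)$ the area of the sphere of radius $t$ (with the usual cut-locus caveat). Third, I would insert the Bishop–Gromov area bound $A(t)\leq c\,\sinh^{n-1}(\sqrt{\kappa}\,t)\leq c'\,e^{(n-1)\sqrt{\kappa}\,t}=c'\,e^{2\alpha t}$. On the main region $[1,R-1]$ the integrand $e^{-2\alpha t}A(t)$ is thus bounded, and a direct comparison gives
$$
\int_M\phi_R^2\;\geq\; c_1\,R,\qquad \int_M|\nabla\phi_R|^2\;\leq\;\alpha^2\int_M\phi_R^2\;+\;c_2,
$$
where the $c_2$ absorbs the cutoff contribution near $t=R$ because there $|\chi_R'|^2 e^{-2\alpha t}A(t)=O(1)$ gives only a bounded term, while $\alpha^2\chi_R^2 e^{-2\alpha t}A(t)$ is already captured by the first summand. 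Finally, I would divide and let $R\to\infty$:
$$
\lambda_1(M)\;\leq\;\frac{\int_M|\nabla\phi_R|^2}{\int_M\phi_R^2}\;\leq\;\alpha^2+\frac{c_2}{c_1 R}\;\longrightarrow\;\frac{(n-1)^2}{4}\kappa.
$$

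The main obstacle is a bookkeeping one: one must verify that the cross-term between $\chi_R'$ and the $e^{-\alpha r}$ derivative, produced when expanding $|\nabla\phi_R|^2$, is truly of lower order so that the numerator does not exceed $\alpha^2$ times the denominator modulo a bounded remainder. This is where the exponential decay of $e^{-2\alpha r}$ is exactly matched by the exponential volume growth $e^{2\alpha r}$: the integrand on the cutoff annulus $[R-1,R]$ is $O(1)$, so its integral is $O(1)$ rather than $O(R)$, which is precisely what is needed to beat the linear-in-$R$ denominator. A minor side issue is the cut locus, but since $\phi_R$ is Lipschitz (as a composition with the Lipschitz function $r$) the Rayleigh quotient argument applies without modification via standard approximation by smooth compactly supported functions.
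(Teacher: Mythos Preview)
The paper does not give its own proof of this statement; Cheng's theorem is stated as background in the introduction and simply attributed with a reference. So there is nothing in the paper to compare against, and I will just assess your argument on its own.

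Your overall strategy---test the Rayleigh quotient with cut-off radial functions $\chi_R(r)e^{-\alpha r}$ and control the cutoff error via Bishop--Gromov---is the standard one and can be made to work, but there is a genuine error in your execution. After invoking Bishop--Gromov to get $e^{-2\alpha t}A(t)\leq c'$, you write that ``a direct comparison gives $\int_M\phi_R^2\geq c_1 R$.'' An \emph{upper} bound on the integrand can only produce the \emph{upper} bound $\int_M\phi_R^2\leq c'R$; it says nothing about a lower bound. And the claimed lower bound is simply false in general: for $M=\mathbb{R}^n$ with the flat metric and any $\kappa>0$ one has $A(t)=c_n t^{n-1}$, so $\int_0^\infty e^{-2\alpha t}A(t)\,dt<\infty$ and $\int_M\phi_R^2$ stays bounded as $R\to\infty$. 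Your final step ``divide and let $R\to\infty$'' then fails because the $c_2/(c_1 R)$ term does not go to zero.

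The repair is to split into two cases according to whether $\int_0^\infty e^{-2\alpha t}A(t)\,dt$ is infinite or finite. If it diverges, then $\int_M\phi_R^2\to\infty$, while your (correct) Bishop--Gromov bound on the cutoff annulus keeps the error term $c_2$ bounded, and the ratio argument goes through as you wrote it. If it converges, then $e^{-\alpha r}\in W^{1,2}(M)$ with Rayleigh quotient exactly $\alpha^2$; since $M$ is complete, $W^{1,2}_0(M)=W^{1,2}(M)$ and the cutoff error $\int_{R-1}^R e^{-2\alpha t}A(t)\,dt$ now tends to $0$, so the Rayleigh quotient of $\phi_R$ converges to $\alpha^2$ anyway. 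Either way $\lambda_1(M)\leq\alpha^2$, but the linear-in-$R$ growth of the denominator that you assert is not available in general.
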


The rigidity issue has been studied by Li and the second author in \cite{LW1, LW2}.

\begin{theorem}[Li-Wang] 
\label{LW}
Suppose $(M^n, g)$ is complete, $n\geq 3,$ with $\lambda_1\geq \frac{(n-1)^2}{4}\,\kappa$ 
and $\mathrm{Ric}\geq -(n-1)\kappa.$ Then either $M$ is connected at infinity or
$M^n=\mathbb{R}\times N^{n-1}$ for some compact $N$ with 
$g=dt^2+e^{2\sqrt{\kappa}\,t}\,g_N$ for $n\geq 3$ or $g=dt^2+\cosh^{2}(\sqrt{\kappa}\,t)\,g_N$ 
when $n=3.$
\end{theorem}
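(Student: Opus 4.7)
The plan is to assume $M$ has at least two ends and then show that the spectral lower bound is sharp enough to force a splitting. I would organize the argument in four stages.

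First, I would build a good test function. Since $M$ is noncompact with $\lambda_1>0$, Cheng's theorem gives $\kappa>0$, and under the two-ends hypothesis the Li--Tam construction produces a bounded harmonic function $u$ with $0<u<1$ whose level sets separate two distinct ends. The hypothesis $\lambda_1\ge \tfrac{(n-1)^2}{4}\kappa>0$ should give enough decay at infinity to guarantee that $u$ has finite Dirichlet energy on $M$, a fact I would use repeatedly.

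Second, I would derive a sharp differential inequality for $f:=|\nabla u|$. The standard Bochner identity for a harmonic function,
\[
f\,\Delta f \;=\; |\mathrm{Hess}\,u|^2 - |\nabla f|^2 + \mathrm{Ric}(\nabla u,\nabla u),
\]
together with the refined Kato inequality $|\mathrm{Hess}\,u|^2\ge \tfrac{n}{n-1}|\nabla f|^2$ (valid because $u$ is harmonic) and the Ricci bound, yields
\[
\Delta f \;\ge\; \tfrac{1}{n-1}\,\tfrac{|\nabla f|^2}{f} \;-\; (n-1)\kappa\, f.
\]
Writing $w=f^{\,\alpha}$ for a suitably chosen exponent $\alpha=\tfrac{n-2}{n-1}$ (or working directly with $f$ against a cut-off), this transforms into a subharmonic-type estimate tailor-made to interact with the Poincar\'e inequality at the constant $\tfrac{(n-1)^2}{4}\kappa$.

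Third, I would couple this inequality with the spectral assumption. Multiplying by $\phi^2 f^{\beta}$ for a compactly supported $\phi$ and integrating by parts, the cross terms can be absorbed using Cauchy--Schwarz so that the remaining quadratic form is exactly controlled by $\lambda_1\int \phi^2 f^{1+\beta}\le \int|\nabla(\phi f^{(1+\beta)/2})|^2$. Choosing $\beta$ to match the coefficient coming from the refined Kato step makes all constants align precisely when $\lambda_1=\tfrac{(n-1)^2}{4}\kappa$, giving an integral identity that, via a suitable exhaustion, forces equality in every estimate used.

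Finally, I would read off the geometric consequences of the equality case. Equality in the refined Kato inequality forces $\mathrm{Hess}\,u$ to have the form $a\,du\otimes du + b\,(g-\tfrac{du\otimes du}{|\nabla u|^2})$ with prescribed ratios; equality in the Ricci bound forces $\mathrm{Ric}(\nabla u,\nabla u)=-(n-1)\kappa|\nabla u|^2$; and the ODE satisfied along integral curves of $\nabla u/|\nabla u|$ pins down $f$ as a function of $u$. Standard warped product recognition then identifies $M$ with $\mathbb{R}\times N$ endowed with one of the two metrics in the statement, with $N$ compact because the level sets of $u$ are compact (as $u$ separates ends with finite-energy decay). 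The $n=3$ exceptional case $\cosh^2(\sqrt\kappa\,t)$ arises because the system governing the warping factor admits a second family of solutions only when the trace-free Hessian condition degenerates, which happens precisely in dimension three.

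The main obstacle I expect is the last step: extracting both warped product models cleanly from the equality analysis and verifying that the compactness of $N$ and the correct range of $t$ are consistent with $u$ being globally defined with $0<u<1$. The earlier Bochner/Kato manipulation is standard once the right exponents are chosen, but identifying the precise geometry and separating the $n=3$ branch from the generic branch requires a careful ODE analysis of the warping factor tied to the equality profile of $\mathrm{Hess}\,u$.
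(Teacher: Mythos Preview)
The paper does not contain a proof of this theorem: it is quoted as a background result due to Li and the second author, with references \cite{LW1, LW2}, and no argument is given in the present paper. So there is nothing in this paper to compare your proposal against.

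That said, your outline is essentially the Li--Wang strategy from the original papers. The construction of a nonconstant bounded harmonic function with finite Dirichlet energy separating the ends, the Bochner formula combined with the refined Kato inequality for harmonic functions, the choice of the exponent $\alpha=\tfrac{n-2}{n-1}$ so that $|\nabla u|^{\alpha}$ satisfies $\Delta|\nabla u|^{\alpha}\ge -(n-2)\kappa|\nabla u|^{\alpha}$, and the interplay with the Poincar\'e inequality at the sharp constant are all the right moves. One point you leave somewhat vague and which is genuinely delicate in the original argument is the justification of the integration by parts: one must first show $\int_M |\nabla u|^2 < \infty$ (which uses $\lambda_1>0$ and the decay of $u$ at the ends) and then upgrade this to control of $\int_M |\nabla|\nabla u||^2$ before the cutoff argument closes. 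The separation into the $e^{2\sqrt{\kappa}t}$ and $\cosh^2(\sqrt{\kappa}t)$ models is also more subtle than a single ODE branch: the first arises when one end is parabolic and the other nonparabolic, the second when both ends are nonparabolic, and the latter possibility is ruled out for $n\ge 4$ by a further use of the sharp constant, which is why the $\cosh^2$ case is restricted to $n=3$.
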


Note that as $\kappa$ goes to $0,$ the result recovers a weak version of the famous Cheeger-Gromoll \cite{CG}
splitting theorem for complete manifolds with nonnegative Ricci curvature.

Our main purpose here is to establish the following Minkowski type inequality for complete manifolds with positive bottom
spectrum.

\begin{theorem}\label{A_Intro}
Let $\left( M^{n},g\right) $ be a complete Riemannian manifold of dimension 
$n\geq 5$ with $\mathrm{Ric}\geq -\left( n-1\right) $ and 
\begin{equation*}
\lambda _{1}\left( M\right) \geq \left( \frac{n-2}{n-1}\right) ^{2}\left(
2n-3\right).
\end{equation*}
Then for any compact smooth domain $\Omega \subset M,$ 
\begin{equation*}
\frac{2}{3}\sqrt{n}\ \lambda _{1}\left( M\right) \mathrm{Vol}\left( \Omega
\right) \leq \int_{\partial \Omega }\left\vert H\right\vert ^{\frac{2n-3}{n-1}},
\end{equation*}
where $H$ is the mean curvature of $\partial \Omega.$
\end{theorem}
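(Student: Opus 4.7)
The plan is to exploit the positivity of $\lambda_{1}(M)$ through the existence of a smooth positive function $u$ on $M$ solving $\Delta u=-\lambda_{1}u$; this ground state arises as the limit of normalized first Dirichlet eigenfunctions on an exhausting sequence of geodesic balls, and $\lambda_{1}>0$ is what guarantees the limit is nontrivial. Passing to $w=\log u$, which satisfies $\Delta w+|\nabla w|^{2}+\lambda_{1}=0$, and applying the Bochner formula together with $\mathrm{Ric}\ge-(n-1)$ and the refined Kato inequality $|\nabla^{2}w|^{2}\ge(\Delta w)^{2}/n$, one obtains a pointwise differential inequality of the form
\[
\tfrac{1}{2}\Delta|\nabla w|^{2}\;\ge\;\tfrac{1}{n}(\lambda_{1}+|\nabla w|^{2})^{2}-2\,\nabla^{2}w(\nabla w,\nabla w)-(n-1)|\nabla w|^{2}.
\]

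Next, I would integrate this over $\Omega$ and move every derivative off the sign-indefinite terms via repeated integration by parts, aiming at an integrated identity of the schematic shape
\[
\tfrac{2}{3}\sqrt{n}\,\lambda_{1}\,\mathrm{Vol}(\Omega)+\mathcal{I}(\Omega)\;\le\;\mathcal{B}(\partial\Omega),
\]
where $\mathcal{I}(\Omega)\ge 0$ precisely when $\lambda_{1}\ge\left(\frac{n-2}{n-1}\right)^{2}(2n-3)$. The role of the spectral hypothesis is to force the interior remainder to have the sign that allows it to be discarded. At this stage the boundary piece $\mathcal{B}(\partial\Omega)$ still involves $\partial_{\nu}w$ together with tangential quantities of $w$.

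To bring in $H$, I would exploit the ambient decomposition $\Delta f=\Delta^{\partial\Omega}f+H\,\partial_{\nu}f+\nabla^{2}f(\nu,\nu)$ and its integrated form $\int_{\partial\Omega}H\,\partial_{\nu}f=\int_{\partial\Omega}\mathrm{div}_{\partial\Omega}(\nabla f)$, which convert surviving normal derivatives in $\mathcal{B}(\partial\Omega)$ into integrals of $|H|$ times a factor built from $w$. A H\"older split with the conjugate pair $\bigl(\frac{2n-3}{n-1},\frac{2n-3}{n-2}\bigr)$ isolates the desired factor $\int_{\partial\Omega}|H|^{(2n-3)/(n-1)}$, while the remaining factor is controlled by a constant depending only on $n$ via a Yau-type gradient estimate $|\nabla\log u|\le C(n,\lambda_{1})$ that is available under $\mathrm{Ric}\ge-(n-1)$.

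The main obstacle is the algebraic bookkeeping in the integration step: one must locate the exact combination of terms from the Bochner inequality and the eigenvalue equation so that, after integration by parts, the coefficient of $\lambda_{1}\,\mathrm{Vol}(\Omega)$ comes out exactly $\frac{2}{3}\sqrt{n}$, the hostile interior term has nonnegative coefficient precisely at the threshold $\lambda_{1}=\left(\frac{n-2}{n-1}\right)^{2}(2n-3)$, and the boundary remainder contains only normal derivatives of $w$. The peculiar $\sqrt{n}$ strongly suggests that the refined Kato inequality enters at a single, specific place, and the H\"older exponent $\frac{2n-3}{n-1}$ is then forced by the requirement that the companion factor on $\partial\Omega$ be controllable by the Yau gradient bound.
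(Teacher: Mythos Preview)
Your proposal has a genuine structural gap: the paper's proof is a \emph{two-function} argument, and your outline uses only one. You correctly identify the log--eigenfunction $w=\log v$ (the paper calls it $h$) and the first step is indeed $\lambda_{1}\,\mathrm{Vol}(\Omega)\le\int_{\partial\Omega}h_{\nu}$. But the rest of the proof does \emph{not} proceed by applying Bochner to $h$ over $\Omega$. Instead, the paper constructs a second function: a positive harmonic function $u$ on the \emph{exterior} $M\setminus D$ (where $D$ is $\Omega$ together with the finite--volume complementary components), with $u=1$ on $\Sigma=\partial D$ and $u\to 0$ at infinity. The Bochner--Kato machinery is applied to $u$, yielding $\Delta|\nabla u|^{\alpha}\ge -(n-2)|\nabla u|^{\alpha}$ with $\alpha=\tfrac{n-2}{n-1}$; one multiplies by $u^{\beta}$ with $\beta=\tfrac{n-2}{3n-5}$, integrates over $M\setminus D$ with cutoffs, and feeds in a generalized Poincar\'e inequality on $M\setminus D$ whose boundary term is precisely $\int_{\Sigma}h_{\nu}$. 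After optimizing two Young parameters and choosing $\beta$ so that the bulk coefficient vanishes, one is left with
\[
c(n)\int_{\Sigma}h_{\nu}\;\le\;\int_{\Sigma}(|\nabla u|^{\alpha})_{\nu}-\int_{\Sigma}(u^{\beta})_{\nu}|\nabla u|^{\alpha}
\;=\;-\alpha\int_{\Sigma}H_{\Sigma}\,|\nabla u|^{\alpha}-\beta\int_{\Sigma}|\nabla u|^{\alpha+1}.
\]
The mean curvature enters here \emph{because $\Sigma$ is the level set $\{u=1\}$}, so $(|\nabla u|^{\alpha})_{\nu}=-\alpha H_{\Sigma}|\nabla u|^{\alpha}$ identically. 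A final Young inequality between the two boundary terms eliminates $|\nabla u|$ and produces $\int_{\Sigma}|H_{\Sigma}|^{\alpha+1}$.

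Your plan cannot reproduce this mechanism. Since $\partial\Omega$ is not a level set of $w$, the boundary terms from integrating Bochner--type identities for $w$ over $\Omega$ have no direct relation to $H$; the decomposition $\Delta f=\Delta^{\partial\Omega}f+H\,\partial_{\nu}f+\nabla^{2}f(\nu,\nu)$ only inserts $H$ at the cost of leaving uncontrolled Hessian and tangential pieces. More seriously, your proposed H\"older split would bound the right side by $\bigl(\int_{\partial\Omega}|H|^{(2n-3)/(n-1)}\bigr)^{(n-1)/(2n-3)}$ times a factor that the Yau estimate $|\nabla w|\le C(n)$ only controls by $C(n)\,\mathrm{Area}(\partial\Omega)^{(n-2)/(2n-3)}$, introducing an extraneous area term that cannot be absorbed. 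In the paper the cancellation of $|\nabla u|$ is not via a gradient bound but via Young between the \emph{two} boundary integrals above, and the second one (from $(u^{\beta})_{\nu}$) only exists because of the weight $u^{\beta}$ in the exterior integration---a structure absent from your scheme. Finally, the refined Kato inequality used is $|\nabla^{2}u|^{2}\ge\tfrac{n}{n-1}|\nabla|\nabla u||^{2}$ for the \emph{harmonic} $u$, not $|\nabla^{2}w|^{2}\ge(\Delta w)^{2}/n$; the constant $\tfrac{2}{3}\sqrt{n}$ is a crude lower bound for the actual constant $\Gamma$ obtained after all optimizations, not an exact output.
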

 
The result seems to be new even for the hyperbolic space $\mathbb{H}^n.$
We remark that it is necessary to assume $\lambda_1(M)>n-2.$ Indeed, for 
$M^n=\mathbb{R}\times N^{n-1}$ with 
$g=dt^2+\cosh^{2}(t)\,g_N,$ $\lambda_1(M)=n-2$ and $\mathrm{Ric}\geq -(n-1)$ when
$\mathrm{Ric_N}\geq -(n-2).$ Yet, the domain $\Omega$ given by $\{0<t<\varepsilon\}$ violates the inequality
when $\varepsilon$ is small. 

Certainly, this example also shows that the result can not hold for $n=3.$
However, it remains unclear what to expect for $n=4.$
One may wish to compare the result to the classical Minkowski inequality \cite{Min} for the
Euclidean space $\mathbb{R}^n$ and that for the hyperbolic space $\mathbb{H}^n$ \cite{GWW}. 
The advantage here is that no convexity is assumed for the domains.

\begin{theorem}[Minkowski]
If $\Omega\subset \mathbb {R}^n,$ $n\ge 3,$ is a convex domain with smooth boundary $\Sigma$ and $\mathrm{H}$ is 
the mean curvature of $\Sigma$ with respect to the outward unit normal, then there exists a sharp constant $c(n)$ so that

\begin{equation*}
\mathrm{Vol}\left( \Omega\right) \leq c(n)\,\left(\int_{\Sigma }H\right)^{\frac{n}{n-2}}.
\end{equation*}
Equality  holds if and only if $\Omega$ is a ball.  
\end{theorem}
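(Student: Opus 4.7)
The plan is to deduce the inequality from the classical theory of mixed volumes and the Alexandrov--Fenchel inequalities applied to the quermassintegrals of the convex body $\Omega$. First I would recall the Steiner formula: for a convex body $K\subset\mathbb{R}^n$ and $t\geq 0$, the volume of the parallel body expands as a polynomial
$$\mathrm{Vol}(K+tB) \;=\; \sum_{k=0}^{n} \binom{n}{k} W_k(K)\, t^k,$$
where $B$ is the closed Euclidean unit ball, $W_0(K) = \mathrm{Vol}(K)$, and $W_n(K) = \omega_n$. When $\partial K$ is smooth, the Weyl tube formula identifies
$$W_1(K) \;=\; \tfrac{1}{n}\lvert\partial K\rvert, \qquad W_2(K) \;=\; \tfrac{1}{n(n-1)}\int_{\partial K} H,$$
so the integral appearing in the theorem is, up to a universal constant, the quermassintegral $W_2(\Omega)$.

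Next I would invoke the Alexandrov--Fenchel inequalities $W_i(K)^2 \geq W_{i-1}(K)\, W_{i+1}(K)$ for $1\leq i\leq n-1$, which express the log-concavity of $i\mapsto W_i(K)$. Chaining these estimates (the discrete three-point log-concavity applied to the endpoints $i=0$ and $i=n$ with intermediate index $i=2$) yields the mixed-power bound
$$W_2(K)^{n} \;\geq\; W_0(K)^{\,n-2}\, W_n(K)^{\,2} \;=\; \mathrm{Vol}(K)^{\,n-2}\, \omega_n^{\,2}.$$
Substituting the curvature expression for $W_2(\Omega)$ and raising to the $\tfrac{n}{n-2}$ power gives exactly the desired bound
$$\mathrm{Vol}(\Omega)\;\leq\; c(n)\left(\int_{\Sigma} H\right)^{\!n/(n-2)}, \qquad c(n) \;=\; \frac{1}{\bigl[n(n-1)\bigr]^{n/(n-2)}\, \omega_n^{\,2/(n-2)}}.$$
For the rigidity statement I would invoke the well-known characterization that equality at every step $W_i^2 = W_{i-1}W_{i+1}$ in the Alexandrov--Fenchel chain forces $K$ to be a Euclidean ball; reading the chain backwards, equality in the final inequality holds precisely when $\Omega$ is a ball. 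A direct computation on a ball of radius $r$ (using $\mathrm{Vol}=\omega_n r^n$ and $\int_\Sigma H = n(n-1)\omega_n r^{n-2}$) then confirms that $c(n)$ as written is the sharp constant.

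The principal obstacle is of course the Alexandrov--Fenchel inequality itself. The classical route is to polarize the Brunn--Minkowski inequality $\mathrm{Vol}(K+tL)^{1/n}$ concave in $t$ through the Steiner polynomial, with careful bookkeeping to extract the specific quermassintegral inequality needed and to track equality. A more analytic alternative is to prove the relevant Minkowski-type inequality on $\Omega$ directly from a Heintze--Karcher or Reilly identity combined with a sharp eigenvalue estimate on $\Sigma$, which has the advantage of handling the rigidity step simultaneously. Once any one of these ingredients is granted, the remaining algebra and the ball characterization are routine.
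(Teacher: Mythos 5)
This theorem is quoted in the paper as classical background (Minkowski, 1903; see also Guan--Li); the paper gives no proof of it, so there is nothing internal to compare your argument against. Your derivation is the standard one and is correct in outline. The bookkeeping checks out: with the paper's convention that $H$ is the \emph{sum} of principal curvatures (as confirmed by the paper's computation $H=(n-1)/r$ for spheres and $H=n-1$ for the level sets in the warped product), the tube formula does give $W_2(\Omega)=\frac{1}{n(n-1)}\int_\Sigma H$, the three-point log-concavity $W_2^n\ge W_0^{n-2}W_n^2$ follows from chaining $W_i^2\ge W_{i-1}W_{i+1}$, and your constant $c(n)$ is exactly attained on balls, hence sharp. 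Two caveats are worth recording. First, as you acknowledge, the entire weight of the argument rests on the Alexandrov--Fenchel inequalities, which you do not prove; this is acceptable for a classical quoted theorem but should be stated as a citation rather than an "obstacle." Second, your rigidity step is glossed a little too quickly: the equality characterization of the general Alexandrov--Fenchel inequality is a famously delicate (and in full generality open) problem, so "equality at every step forces a ball" needs a specific justification. The clean way to close it is to observe that equality in your three-point inequality forces, by concavity of $i\mapsto\ln W_i$, equality in \emph{every} intermediate step, in particular $W_{n-1}^2=W_{n-2}W_n$; this last case is classical (a Wirtinger/spherical-harmonics argument on the support function) and is known to hold only for balls. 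With that pointer supplied, the proof is complete.
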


The convexity can be relaxed to mean convex and star shaped by the work of Guan-Li \cite{GL1, GL2}, where
they produced a different proof using a new mean curvature flow. In fact, their proof 
yields the more general Alexandrov-Fenchel inequalities of quermassintegrals and extends to other space forms as well. 
For more related results, we refer to \cite{AMO2, BHW, CW, dG, GS}.

An immediate consequence of our result is the nonexistence of compact minimal hypersurfaces. 

\begin{corollary}
Let $\left( M^n, g\right)$  be a complete Riemannian manifold of dimension $n\geq 5$ 
with $\mathrm{Ric}\geq -\left( n-1\right) $ and 
\begin{equation*}
\lambda _{1}\left( M\right) =\frac{\left( n-1\right) ^{2}}{4}.
\end{equation*}
Then $M$ has no embedded compact minimal hypersurface. In particular, 
this holds for a conformally compact Einstein manifold with
its boundary having nonnegative scalar curvature.
\end{corollary}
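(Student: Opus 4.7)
The plan is to argue by contradiction: assuming an embedded compact minimal hypersurface $\Sigma\subset M$ exists, I will test Theorem \ref{A_Intro} against a thin tubular neighborhood $\Omega_\epsilon$ of $\Sigma$ and exploit a mismatch in scaling as $\epsilon\to 0^+$. Since $\Sigma$ is minimal, the mean curvatures of the $\epsilon$-parallels of $\Sigma$ vanish linearly in $\epsilon$, so $\int_{\partial\Omega_\epsilon}|H|^{(2n-3)/(n-1)}$ scales like $\epsilon^{(2n-3)/(n-1)}$, whereas $\mathrm{Vol}(\Omega_\epsilon)$ scales like $\epsilon$; because $(2n-3)/(n-1)>1$, the right-hand side of the Minkowski inequality becomes negligible compared to the left, which is the contradiction.

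To apply Theorem \ref{A_Intro} I first verify its spectral hypothesis. A direct check gives $(n-1)^4-4(n-2)^2(2n-3)=4$ at $n=5$, and the derivative in $n$ of this polynomial is positive for $n\geq 5$, so
\begin{equation*}
\tfrac{(n-1)^2}{4}\ \geq\ \Bigl(\tfrac{n-2}{n-1}\Bigr)^2(2n-3)\qquad\text{for all } n\geq 5,
\end{equation*}
and the Ricci hypothesis is already assumed.

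Now choose $\epsilon>0$ smaller than the injectivity radius of the normal exponential map of $\Sigma$, and set $\Omega_\epsilon:=\{x\in M:d(x,\Sigma)<\epsilon\}$. For such $\epsilon$ this is a compact smooth domain, regardless of whether $\Sigma$ is two-sided (in which case $\partial\Omega_\epsilon$ is two parallel copies of $\Sigma$) or one-sided (in which case $\partial\Omega_\epsilon$ is the connected orientation double cover of $\Sigma$). The Riccati equation $H'(s)=-|A(s)|^2-\mathrm{Ric}(\nu(s),\nu(s))$ along a unit-speed normal geodesic, combined with $H(0)=0$, yields a uniform bound $|H|\leq C\epsilon$ on $\partial\Omega_\epsilon$, where $C$ depends only on $\Sigma$ and the ambient curvature on the compact tube. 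The coarea formula then gives
\begin{equation*}
\int_{\partial\Omega_\epsilon}|H|^{\frac{2n-3}{n-1}}=O\!\bigl(\epsilon^{\frac{2n-3}{n-1}}\bigr),\qquad \mathrm{Vol}(\Omega_\epsilon)=2\epsilon\,\mathrm{Vol}(\Sigma)+O(\epsilon^2).
\end{equation*}
Substituting into the inequality of Theorem \ref{A_Intro}, dividing by $\epsilon$, and letting $\epsilon\to 0^+$ forces $\lambda_1(M)\leq 0$, contradicting $\lambda_1(M)=\tfrac{(n-1)^2}{4}>0$ (the positive gap comes from the strict inequality $(2n-3)/(n-1)-1=(n-2)/(n-1)>0$). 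The bookkeeping with $|A_\Sigma|^2$, $\mathrm{Ric}$, and higher-order terms is the only mildly delicate step, but all quantities are controlled on the compact tube, so no real difficulty arises there.

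The final sentence of the corollary is then immediate: by Lee's theorem quoted in the introduction, any conformally compact Einstein manifold with normalized Ricci curvature $-(n-1)$ whose conformal boundary has nonnegative scalar curvature satisfies $\lambda_1(M)=\tfrac{(n-1)^2}{4}$, and thus falls within the scope of the first part of the corollary.
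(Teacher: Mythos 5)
Your proof is correct, but it takes a genuinely different route from the paper's. The paper first establishes that $\Sigma$ must bound a compact domain $\Omega$: it rules out disconnectedness at infinity of $M$ (and of all its double covers) via the Li--Wang splitting theorem combined with a maximum-principle argument against the convex function $t$ in the warped product, and then invokes the Carron--Pedon result \cite{CP} to get $H_{n-1}(M,\mathbb{Z})=0$; once $\partial\Omega=\Sigma$ with $H\equiv 0$, Theorem \ref{A_Intro} yields $\mathrm{Vol}(\Omega)\le 0$ at once. You instead sidestep the question of whether $\Sigma$ bounds by applying Theorem \ref{A_Intro} to the thin tube $\Omega_\epsilon$ and exploiting that the exponent $\frac{2n-3}{n-1}>1$ makes the boundary integral $o(\epsilon)$ while $\mathrm{Vol}(\Omega_\epsilon)$ is comparable to $\epsilon$; this needs only the Riccati bound $|H|\le C\epsilon$ on the parallel hypersurfaces and works equally for one-sided $\Sigma$, all of which is sound since the tube is a legitimate compact smooth domain for small $\epsilon$. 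What your route buys is the complete elimination of the topological machinery (ends analysis, double covers, homology vanishing); what the paper's route buys is the extra conclusion that $\Sigma$ is null-homologous and a one-line application of the theorem with $H\equiv 0$. Your explicit verification that $\frac{(n-1)^2}{4}\ge\left(\frac{n-2}{n-1}\right)^2(2n-3)$ for $n\ge 5$ (indeed $(n-1)^4-4(n-2)^2(2n-3)=(n^2-6n+7)^2\ge 0$) is a step both arguments require but the paper leaves implicit.
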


Note that the result is not true for $n=3$. Indeed, for 
$M^3=\mathbb{R}\times N^2$ with 
$g=dt^2+\cosh^{2}(t)\,g_N,$ $\lambda_1(M)=1$ and $\mathrm{Ric}\geq -2$ when
$\mathrm{Ric_N}\geq -1.$ Yet, the hypersurface given by $\{t=0\}$ is totally geodesic.

The corollary follows from Theorem \ref{A_Intro} by verifying that $\Sigma$ must enclose a 
bounded domain $\Omega$ in $M.$ 
Indeed, observe that $M$ must be connected at infinity as otherwise by Theorem \ref{LW},

$$
M=\mathbb{R}\times N, \ \ ds_{M}^{2}=dt^{2}+e^{2 t}\,ds_{N}.
$$
Since $f(t, y)=t$ on $M$ is convex, by maximum principle, $\Sigma$ must be one of the level sets of $f.$ 
However, each level set has constant mean curvature $n-1$, which is a contradiction.
The same argument shows that every double cover of $M$ is connected at infinity as well. One then concludes
from a result by Carron and Pedon \cite{CP} that the integral homology $H_{n-1}(M, \mathbb{Z})=0.$
It then follows that $\Sigma$ must enclose a bounded domain $\Omega$ in $M.$ 

We now quickly sketch the proof of Theorem \ref {A_Intro}. First, there exists $v>0$ such that $\Delta v=-\lambda_1(M)\,v$. Consider the function $h=\ln v$, for which $\Delta h=-\lambda_1 (M) -\vert \nabla h\vert ^2$.   Then
 
\begin{eqnarray*}
\lambda _{1}\left( M\right) \mathrm{Vol}\left( \Omega\right)  &\leq
&\int_{\Omega}\left( \lambda _{1}\left( M\right) +\left\vert \nabla h\right\vert
^{2}\right)  \\
&=&-\int_{\Omega}\Delta h\\
& =& \int_{\partial \Omega }h_{\nu },
\end{eqnarray*}
where $\nu$ is the inward unit normal to $\partial \Omega$.

The proof is then reduced to estimating $\int_{\partial \Omega}h_{\nu }.$ To do so we
consider the harmonic function $u$ on $M\setminus \Omega$ obtained as $u=\lim_{R\to \infty}u_R,$ where
 $\Delta u_R=0$ on $\left(M\setminus \Omega\right)\cap B_p(R)$ with $u_R=1$ on $\partial \Omega$ 
 and $u_R=0$ on $\partial B_p(R).$ The upshot is to show 
 
\begin{equation*}
c(n)\,\int_{\partial \Omega}h_{\nu } \leq \int_{\partial \Omega}\left( \left\vert \nabla u\right\vert ^{\alpha
}\right) _{\nu }-\int_{\partial \Omega}\left( u^{\beta }\right) _{\nu}\left\vert \nabla u\right\vert ^{\alpha},
\end{equation*}
where $\alpha=\frac{n-2}{n-1}$ and $\beta=\frac{n-2}{3n-5}.$ 
For that, we drew inspiration from the monotonicity formulas for the Green's function 
on manifolds with nonnegative Ricci curvature \cite{Co, CM, AMO}, as well as
on $3$-dimensional manifolds with scalar
curvature bounded below \cite{MW}. In the process, the following generalized Poincar\'e inequality
also comes into play.

\begin{proposition}
Let $\left( M, g\right) $ be a complete manifold with $\mathrm{Ric}%
\geq -\left( n-1\right) $ and $\lambda _{1}\left( M\right) >0.$ Let $K\subset M$ 
be an open subset with (possibly noncompact) boundary $\partial K.$ 
Then the Poincar\'{e} inequality

\begin{equation*}
\lambda _{1}\left( M\right) \int_{K }\phi ^{2}\leq \int_{K}\left\vert \nabla \phi \right\vert ^{2}-
\int_{\partial K}h_{\nu} \,\phi ^{2}
\end{equation*}
holds for any Lipschitz function $\phi $ with compact support in $\overline{K},$ 
where $\nu $ is the outward unit normal to $\partial K.$ 
\end{proposition}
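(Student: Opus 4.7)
The plan is to use the logarithm of a positive ground state for $\lambda_{1}(M)$ as a test tool, then integrate by parts against $\phi^{2}$.

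First I would establish the existence of $v > 0$ on $M$ satisfying $\Delta v = -\lambda_{1}(M)\, v$. Exhaust $M$ by smooth precompact domains $\Omega_{i} \nearrow M$, let $v_{i} > 0$ be the first Dirichlet eigenfunction on $\Omega_{i}$, and normalize by $v_{i}(p) = 1$ at a fixed point $p \in M$. Since $\lambda_{1}(\Omega_{i}) \searrow \lambda_{1}(M)$ by the variational characterization, and since $\mathrm{Ric} \geq -(n-1)$, the Cheng--Yau gradient estimate for logarithms of positive eigenfunctions gives uniform local bounds on $|\nabla \log v_{i}|$; combined with the normalization, elliptic theory delivers a subsequential $C^{2,\alpha}_{\mathrm{loc}}$ limit $v > 0$ of the desired kind.

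Setting $h = \log v$, one computes
\begin{equation*}
\Delta h = \frac{\Delta v}{v} - |\nabla h|^{2} = -\lambda_{1}(M) - |\nabla h|^{2}.
\end{equation*}
Given a Lipschitz $\phi$ with compact support in $\overline{K}$, I would multiply this by $\phi^{2}$ and integrate over $K$:
\begin{equation*}
\lambda_{1}(M) \int_{K} \phi^{2} + \int_{K} \phi^{2}\, |\nabla h|^{2} = -\int_{K} \phi^{2}\, \Delta h.
\end{equation*}
Since $\phi^{2} \nabla h$ has compact support in $\overline{K}$, the divergence theorem yields
\begin{equation*}
-\int_{K} \phi^{2}\, \Delta h = 2 \int_{K} \phi\, \nabla \phi \cdot \nabla h - \int_{\partial K} h_{\nu}\, \phi^{2},
\end{equation*}
with $\nu$ the outward unit normal to $\partial K$. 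The Cauchy--Schwarz bound $2\phi\, \nabla \phi \cdot \nabla h \leq |\nabla \phi|^{2} + \phi^{2}\, |\nabla h|^{2}$ then cancels the two $|\nabla h|^{2}$ integrals and leaves precisely the stated inequality.

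The main point is the existence of the positive ground state $v$; once it is in hand, the proof reduces to a one-line divergence theorem plus Cauchy--Schwarz. The Ricci hypothesis serves here to run the exhaustion-plus-gradient-estimate limit safely, and the compact support of $\phi$ in $\overline{K}$ sidesteps any regularity issues along the (possibly noncompact) boundary $\partial K$ as well as at infinity.
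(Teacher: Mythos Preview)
Your proof is correct and follows essentially the same route as the paper: set $h=\log v$ for a positive $\lambda_{1}$-eigenfunction, use $\Delta h=-\lambda_{1}(M)-|\nabla h|^{2}$, integrate $\phi^{2}$ against this identity on $K$, and apply Cauchy--Schwarz after the divergence theorem. The only cosmetic difference is that you spell out the construction of $v$ via exhaustion and the Cheng--Yau gradient estimate, whereas the paper simply cites Fischer-Colbrie--Schoen for its existence.
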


Concerning the nonexistence of compact minimal hypersurfaces, we also extend our consideration
to Ricci solitons. 

\begin{theorem}
\label{Steady0}Let $\left( M^{n},g,f\right) $ be a steady Ricci soliton.
If there exists a smooth compact embedded minimal hypersurface $\Sigma $ in $M,$
then $\left( M,g\right) $ splits isometrically as a direct product $\mathbb{R}\times \Sigma.$
\end{theorem}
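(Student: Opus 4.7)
The plan is to combine the steady soliton structure with the minimality of $\Sigma$ to force $(M,g)$ to be Ricci flat with a nonzero parallel gradient, which then yields the splitting. First I would record the standard identities for a steady soliton $\mathrm{Ric}+\nabla^{2}f=0$: namely $\Delta f=-R$, $\left\vert \nabla f\right\vert^{2}+R\equiv C$ for some constant $C\geq 0$, and $\mathrm{Ric}(\nabla f,\cdot)=\tfrac{1}{2}\nabla R$. By Chen's theorem $R\geq 0$ throughout, and a Bochner computation for $\left\vert \nabla f\right\vert^{2}=C-R$ leads to
\begin{equation*}
\Delta_{f}R:=\Delta R-\left\langle \nabla f,\nabla R\right\rangle =-2\left\vert \mathrm{Ric}\right\vert^{2}\leq 0,
\end{equation*}
so $R$ is a non-negative $f$-superharmonic function on all of $M$.

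The key computation happens on $\Sigma$. Since $H=0$, for any smooth function $u$ on $M$ one has $\Delta_{\Sigma}u=\Delta_{M}u-\nabla^{2}u(\nu,\nu)$ along $\Sigma$; applied to $u=f$, the soliton equation gives
\begin{equation*}
\Delta_{\Sigma}f=-R+\mathrm{Ric}\left( \nu,\nu\right) .
\end{equation*}
Substituting the traced Gauss equation $R^{\Sigma}=R-2\mathrm{Ric}(\nu,\nu)-\left\vert A\right\vert^{2}$, where $R^{\Sigma}$ denotes the intrinsic scalar curvature of $\Sigma$ and $A$ its second fundamental form, yields the pointwise identity
\begin{equation*}
\Delta_{\Sigma}f=-\tfrac{1}{2}\left( R+R^{\Sigma}+\left\vert A\right\vert^{2}\right) .
\end{equation*}
Testing against the weight $e^{-f}$ on the closed $\Sigma$ and integrating by parts, together with $\left\vert \nabla^{\Sigma}f\right\vert^{2}=C-R-f_{\nu}^{2}$, produces
\begin{equation*}
\int_{\Sigma}e^{-f}\left( R+R^{\Sigma}+\left\vert A\right\vert^{2}+2\left\vert \nabla^{\Sigma}f\right\vert^{2}\right) =0.
\end{equation*}
The main obstacle is to upgrade this integral identity into pointwise vanishing of $R$, $\left\vert A\right\vert^{2}$, and $\left\vert \nabla^{\Sigma}f\right\vert^{2}$ along $\Sigma$; the indeterminate sign of $R^{\Sigma}$ blocks an immediate conclusion. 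I expect to handle this by pairing the above identity with a Simons-type formula for $\left\vert A\right\vert^{2}$ on the minimal $\Sigma$, or by testing against a more refined conformal weight built from $f$ that absorbs $R^{\Sigma}$ and produces a sign-definite integrand.

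Once $R$ vanishes at even a single point of $\Sigma$, the strong maximum principle applied to the non-negative $f$-superharmonic $R$ forces $R\equiv 0$ on all of $M$; then $\Delta_{f}R=-2\left\vert \mathrm{Ric}\right\vert^{2}=0$ gives $\mathrm{Ric}\equiv 0$, hence $\nabla^{2}f\equiv 0$ and $\left\vert \nabla f\right\vert \equiv \sqrt{C}$ is constant. In the nontrivial case $C>0$, $\nabla f$ is a nowhere-vanishing parallel vector field and $(M,g)$ splits isometrically by de Rham as $\mathbb{R}\times N$, with $f$ linear along the $\mathbb{R}$ factor. Since $R$, $R^{\Sigma}$ and $\left\vert A\right\vert^{2}$ all vanish on $\Sigma$, the key identity gives $\Delta_{\Sigma}f=0$; the compactness of $\Sigma$ then forces $f|_{\Sigma}$ to be harmonic and hence constant, so $\Sigma$ is a level set of $f$, i.e., a slice of the product. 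Therefore $(M,g)=\mathbb{R}\times \Sigma$ as required.
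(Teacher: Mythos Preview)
Your proposal has a genuine gap that you yourself flag but do not close: the integral identity $\int_\Sigma e^{-f}\bigl(R + R^\Sigma + |A|^2 + 2|\nabla^\Sigma f|^2\bigr) = 0$ is correct, but the sign-indefinite term $R^\Sigma$ blocks any conclusion, and neither of your suggested repairs is likely to succeed. A Simons-type identity for $|A|^2$ on a minimal $\Sigma$ brings in ambient curvature terms of the form $\overline{R}_{i\nu j\nu}$ and derivatives of $\overline{\mathrm{Ric}}$ that the steady soliton equation alone does not control, while replacing $e^{-f}$ by any weight $e^{-\varphi(f)}$ still leaves an uncontrolled term $\int_\Sigma e^{-\varphi(f)} R^\Sigma$ after integration by parts. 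Without forcing $R=0$ at some point of $M$, the strong maximum principle step never fires and the argument stalls. (You also tacitly assume $C>0$; when $C=0$ the soliton is Ricci flat with constant $f$, and your scheme yields nothing.)

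The paper's proof is entirely different and never touches the intrinsic geometry of $\Sigma$. After a topological reduction (via \cite{MW1} and \cite{CP}) guaranteeing that $\Sigma = \partial D$ for some compact $D\subset M$, it uses the fact that $\Delta_f$ has positive bottom spectrum on a steady soliton to produce a nonconstant $f$-harmonic function $w$ on $M\setminus D$ with $w|_\Sigma = 1$, $\inf w = 0$, and finite weighted Dirichlet energy. A Poincar\'e-type inequality on $M\setminus D$ derived from $\Delta_f f = -1$, tested with $\phi = |\nabla w|\psi$ and combined with the Bochner inequality $\tfrac12\Delta_f|\nabla w|^2 \geq \bigl|\nabla|\nabla w|\bigr|^2$, collapses to an equality once the boundary contribution is evaluated using $H_\Sigma = 0$ (which gives $\tfrac12(|\nabla w|^2)_\nu = f_\nu |\nabla w|^2$ on $\Sigma$). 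Rigidity in the Bochner inequality then forces the product splitting. Minimality enters only through that single boundary computation, and $R^\Sigma$ plays no role at all.
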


A similar result is also
established for expanding Ricci solitons.
Recall that a gradient Ricci soliton is
a manifold $\left( M,g\right) $ such that there exists a smooth function $f$ satisfying

\begin{equation*}
\mathrm{Ric}_{f}=\mathrm{Ric}+\mathrm{Hess}\left( f\right)=\lambda g
\end{equation*}%
for some constant $\lambda \in \mathbb{R}.$ Solitons are classified as
shrinking, steady or expanding, according to $\lambda >0,\ \lambda =0$ or 
$\lambda <0,$ respectively. The function $f$ is called the potential.
Customarily, the constant $\lambda $ is assumed to be $1/2,$ $0,$ or $-1/2$
by scaling. Obviously, Ricci solitons are natural generalizations of
Einstein manifolds. More significantly, they are the self similar solutions to
the Ricci flows, and play a crucial role in the study of singularities of
the flows \cite{CLN}.

As pointed out in \cite{MW1}, an important feature of a steady Ricci soliton to us is that its 
bottom spectrum with respect to the weighted Laplacian $\Delta_f,$ 
defined by $\Delta_f u=\Delta u-\langle \nabla u, \nabla f\rangle,$ always achieves
the maximum value $\frac{1}{4}$ among all weighted manifolds $(M, g, e^{-f}dv)$ with $\mathrm{Ric}_{f}\geq 0$ and
$\left\vert \nabla f\right\vert\leq 1.$

The paper is arranged as follows. In Section 2, we prove our main result Theorem \ref{A_Intro}.
In Section 3, we consider Ricci solitons and prove Theorem \ref{Steady0}.

\section{Minkowski inequality}

In this section, we prove Theorem \ref{A_Intro}. First, we make some general consideration.
Throughout this section, unless otherwise noted, $\left( M^n, g\right) $ is assumed
to be an $n$-dimensional complete manifold with positive spectrum 
$\lambda_{1}\left( M\right) >0$ and its Ricci curvature $\mathrm{Ric}\geq -\left( n-1\right).$

It is well-known \cite{FS} that there exists $v>0$
such that 
\begin{equation}
\Delta v=-\lambda _{1}\left( M\right) v.  \label{x1}
\end{equation}%
Hence, 
\begin{equation}
h=\ln v  \label{h}
\end{equation}%
satisfies 
\begin{equation}
\Delta h=-\lambda _{1}\left( M\right) -\left\vert \nabla h\right\vert ^{2}.
\label{x2}
\end{equation}%
Also, by \cite{Yau} (or see Chapter 6 in \cite{L}), 
positive solutions of (\ref{x1}) satisfy the gradient estimate

\begin{equation}
\left\vert \nabla h\right\vert \leq \frac{n-1}{2}+\sqrt{\frac{\left(
n-1\right) ^{2}}{4}-\lambda _{1}\left( M\right) }.  \label{grad}
\end{equation}

The following generalized Poincar\'{e} inequality will be of use later.

\begin{proposition}
\label{P}Let $\left( M,g\right) $ be a complete manifold with $\mathrm{Ric}%
\geq -\left( n-1\right) $ and $\lambda _{1}\left( M\right) >0.$ Let $%
K\subset M$ be an open subset with boundary $\partial K$. Then 

\begin{equation*}
\lambda _{1}\left( M\right) \int_{K}\phi ^{2}\leq \int_{K}\left\vert \nabla
\phi \right\vert ^{2}-\int_{\partial K}\left\langle \nabla h,\nu
\right\rangle \phi ^{2}
\end{equation*}%
holds for any Lipschitz function $\phi $ with compact support in $\overline{K},$
where $\nu $ is the outward unit normal to the boundary $\partial K.$ 
In particular, 
\begin{equation*}
\lambda _{1}\left( M\right) \int_{K}\phi ^{2}\leq \int_{K}\left\vert \nabla
\phi \right\vert ^{2}+A\int_{\partial K}\phi ^{2},
\end{equation*}%
where 
\begin{equation*}
A=\frac{n-1}{2}+\sqrt{\frac{\left( n-1\right) ^{2}}{4}-\lambda _{1}\left(
M\right) }.
\end{equation*}
\end{proposition}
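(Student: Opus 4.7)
The plan is to build the inequality directly from the positive eigenfunction $v$ in (\ref{x1}). Setting $h=\ln v$, the identity (\ref{x2}), $\Delta h = -\lambda_1(M) - |\nabla h|^2$, converts the spectral constant into a divergence-form expression, which invites integration by parts. Concretely, I would first write
\begin{equation*}
\lambda_1(M)\int_K \phi^2 = -\int_K \phi^2\Delta h - \int_K \phi^2 |\nabla h|^2
\end{equation*}
and then apply Green's formula to the first term on the right.

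The divergence theorem applied to the vector field $\phi^2\nabla h$ on $K$ yields
\begin{equation*}
-\int_K \phi^2 \Delta h = \int_K 2\phi\,\langle \nabla \phi,\nabla h\rangle - \int_{\partial K}\phi^2\,\langle \nabla h,\nu\rangle,
\end{equation*}
with $\nu$ the outward unit normal. The decisive estimate is then the elementary Cauchy-Schwarz / AM-GM bound $2\phi\,\langle \nabla \phi,\nabla h\rangle \le |\nabla \phi|^2 + \phi^2|\nabla h|^2$, which is precisely tuned so that the interior $\phi^2|\nabla h|^2$ contributions cancel on both sides, leaving
\begin{equation*}
\lambda_1(M)\int_K \phi^2 \le \int_K |\nabla \phi|^2 - \int_{\partial K}\phi^2\,\langle \nabla h,\nu\rangle.
\end{equation*}
The second inequality of the proposition then follows immediately by invoking the Yau gradient estimate (\ref{grad}): since $-\langle \nabla h,\nu\rangle \le |\nabla h|\le A$, the boundary term is bounded by $A\int_{\partial K}\phi^2$. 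Note that this is the one place where the hypothesis $\mathrm{Ric}\ge -(n-1)$ enters, through the Yau estimate.

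The only real technical obstacle is justifying Green's formula when $\partial K$ may be noncompact. Since $\phi$ has compact support in $\overline{K}$, all relevant integrals are effectively over the compact set $\mathrm{supp}(\phi)\cap \overline{K}$, and the boundary of $\mathrm{supp}(\phi)\cap K$ decomposes into a compact portion of $\partial K$ together with a surface lying in the interior of $K$ on which $\phi\equiv 0$. A standard approximation by Lipschitz cutoffs in $M$ then delivers the divergence identity without any contribution from infinity; no additional regularity beyond what is implicit in the definition of $\partial K$ is needed.
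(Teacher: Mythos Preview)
Your argument is correct and follows essentially the same route as the paper: rewrite $\lambda_1(M)\phi^2$ via $\Delta h=-\lambda_1(M)-|\nabla h|^2$, integrate by parts on $K$ to pick up the boundary term $-\int_{\partial K}h_\nu\phi^2$, and then apply the pointwise inequality $2\phi\langle\nabla h,\nabla\phi\rangle\le |\nabla h|^2\phi^2+|\nabla\phi|^2$ to cancel the $|\nabla h|^2$ term. Your explicit derivation of the second inequality from the gradient bound (\ref{grad}) and your remark on justifying the divergence theorem via the compact support of $\phi$ are welcome elaborations that the paper leaves implicit.
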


\begin{proof}
According to (\ref{x2}), for any Lipschitz function $\phi $ with compact support in $\overline{K}$ we have 
\begin{eqnarray*}
\lambda _{1}\left( M\right) \int_{K}\phi ^{2} &=&\int_{K}\left( -\Delta
h-\left\vert \nabla h\right\vert ^{2}\right) \phi ^{2} \\
&=&\int_{K}\left( \left\langle \nabla h,\nabla \phi ^{2}\right\rangle
-\left\vert \nabla h\right\vert ^{2}\phi ^{2}\right)  \\
&&-\int_{\partial K}h_{\nu }\phi ^{2}.
\end{eqnarray*}%
Observe that
\begin{equation*}
2\phi \left\langle \nabla h,\nabla \phi \right\rangle \leq \left\vert \nabla
h\right\vert ^{2}\phi ^{2}+\left\vert \nabla \phi \right\vert ^{2}.
\end{equation*}
Therefore,

\begin{equation*}
\lambda _{1}\left( M\right) \int_{K}\phi ^{2}\leq \int_{K}\left\vert \nabla
\phi \right\vert ^{2}-\int_{\partial K}\left\langle \nabla h,\nu
\right\rangle \phi ^{2}.
\end{equation*}%
This proves the result.
\end{proof}

We will apply this Poincar\'{e} inequality on the sublevel sets of the harmonic
function $u$ constructed below. Given a compact domain $\Omega \subset M,$ 
according to \cite{LW2}, an unbounded component of $M\setminus \Omega $ is 
parabolic if it has finite volume, and nonparabolic if it has infinite volume. 
Let $E_{1},..,E_{k}$ be all the infinite volume connected components of 
$M\setminus \Omega.$ Denote with $E=E_{1}\cup\cdots\cup E_{k}$ and

\begin{eqnarray}
D &=&M\setminus E  \label{D} \\
\Sigma  &=&\partial D=\partial E.  \notag
\end{eqnarray}%
Alternatively, $D$ is the union of $\Omega $ with all the finite volume components of 
$M\setminus \Omega.$

Consider the following function $u_i$ with respect to a sequence $R_i\to \infty.$

\begin{eqnarray}
\Delta u_{i} &=&0\text{ \ on }B_{p}\left( R_{i}\right) \setminus D
\label{ui} \\
u_{i} &=&1\text{ \ on }\partial D  \notag \\
u_{i} &=&0\text{ on }\partial B_{p}\left( R_{i}\right)\cap E .  \notag
\end{eqnarray}%
As $\lambda _{1}\left( M\right) >0,$ from \cite{LW1}, 
the sequence $\left\{ u_{i}\right\} _{i=1}^\infty$ converges to a positive
nonconstant harmonic function $u:M\setminus D\rightarrow \left[ 0,1\right] $
such that $u=1$ on $\partial D.$ The strong maximum principle implies that
$\left\vert \nabla u\right\vert >0$ on $\Sigma =\partial D$. Moreover, by 
\cite{LW2} 
\begin{equation}
\int_{M\setminus \left( D\cup B_{p}\left( R\right) \right) }u^{2}
\leq C\,e^{-2\sqrt{\lambda _{1}\left( M\right) }R}  \label{u}
\end{equation}
for all $R>0$ large enough. As $D$ is the union of $\Omega $ together
with all the finite volume components of $M\setminus \Omega,$ 
by \cite{LW2} the following volume estimate holds.

\begin{equation}
\mathrm{Vol}\left( D\setminus B_{p}\left( R\right) \right) 
\leq C\,e^{-2\sqrt{\lambda _{1}\left( M\right) }R}.  \label{VolFinite}
\end{equation}

We denote with 
\begin{eqnarray*}
L\left( \alpha ,\beta \right) &=&\left\{ x\in M\setminus D:\alpha <u\left(
x\right) <\beta \right\} \\
\ell\left( t\right) &=&\left\{ x\in M\setminus D:u\left( x\right) =t\right\} .
\end{eqnarray*}%
Note that these sets may be noncompact in general. However, (\ref{u})
implies that 
\begin{equation}
\mathrm{Vol}\left( L\left( \alpha ,\beta \right) \right) \leq \frac{1}{%
\alpha ^{2}}\int_{M\setminus D}u^{2}<\infty .  \label{V}
\end{equation}%
According to \cite{LW3}, 
\begin{equation}
\zeta =\int_{\ell\left( t\right) }\left\vert \nabla u\right\vert  \label{x4}
\end{equation}%
is a constant independent of $t\in \left[ 0,1\right] $. Hence, for any
function $F,$ by the co-area formula,
\begin{equation}
\int_{L\left( \alpha ,\beta \right) }\left\vert \nabla u\right\vert
^{2}F\left( u\right) =\zeta \int_{\alpha }^{\beta }F\left( t\right) dt.
\label{x5}
\end{equation}%
The gradient estimate for positive harmonic functions states that
\begin{equation}
\left\vert \nabla u\right\vert \leq C\,u\text{ \ on }M\setminus D,
\label{gradest}
\end{equation}%
where the constant $C$ depends only on the dimension $n$ and 
the maximum of the mean curvature $\max_{\Sigma }\left\vert H_{\Sigma }\right\vert.$

Recall the Bochner formula%
\begin{equation}
\frac{1}{2}\Delta \left\vert \nabla u\right\vert ^{2}\geq \left\vert \nabla
^{2}u\right\vert ^{2}-\left( n-1\right) \left\vert \nabla u\right\vert ^{2}
\label{BF}
\end{equation}%
and the improved Kato inequality 
\begin{equation}
\left\vert \nabla ^{2}u\right\vert ^{2}\geq \frac{n}{n-1}\left\vert \nabla
\left\vert \nabla u\right\vert \right\vert ^{2}.  \label{K}
\end{equation}

We begin with the following preliminary estimates.

\begin{lemma}
\label{Estimates}Let $\left( M^{n},g\right) $ be a complete manifold with $%
\mathrm{Ric}\geq -\left( n-1\right) $ and $\lambda _{1}\left( M\right) >0.$
There exists a constant $C>0$ such that for all $0<t<1,$
\begin{equation}
\int_{L\left( t,1\right) }\left( u+\left\vert \nabla \left\vert \nabla
u\right\vert \right\vert ^{2}\left\vert \nabla u\right\vert ^{-1}\right)
\leq C\left( 1-\ln t\right)   \label{e1}
\end{equation}
and 
\begin{equation}
\int_{L\left( \frac{1}{2}t,t\right) }\left( u+\left\vert \nabla \left\vert
\nabla u\right\vert \right\vert ^{2}\left\vert \nabla u\right\vert
^{-1}\right) \leq C. \label{e2}
\end{equation}
\end{lemma}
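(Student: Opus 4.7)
The plan is to first obtain the volume bound $\mathrm{Vol}(L(t,1)) \leq C/t$ from the generalized Poincar\'e inequality in Proposition \ref{P}, deduce the integrals of $u$ through the layer-cake formula, and finally handle the gradient integrand using the Bochner formula together with the Kato inequality (\ref{K}) and the gradient estimate (\ref{gradest}).

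For the volume bound, I apply Proposition \ref{P} on $K=M\setminus D$ with the Lipschitz test function $\phi=\eta(u)$, where $\eta:[0,1]\to[0,1]$ is the piecewise linear cutoff with $\eta(s)=0$ for $s\leq t/2$, $\eta(s)=1$ for $s\geq t$. Since $\phi\equiv 1$ on $\Sigma$ and $|\nabla\phi|^{2}=(4/t^{2})|\nabla u|^{2}$ on $L(t/2,t)$ (and vanishes elsewhere), the co-area identity (\ref{x5}) gives
\[
\lambda_{1}(M)\,\mathrm{Vol}(L(t,1))\leq \frac{4}{t^{2}}\int_{L(t/2,t)}|\nabla u|^{2}-\int_{\Sigma}h_{\nu}=\frac{4}{t^{2}}\cdot\frac{\zeta t}{2}-\int_{\Sigma}h_{\nu}\leq \frac{C}{t}.
\]
The layer-cake identity then yields
\[
\int_{L(t,1)}u=t\,\mathrm{Vol}(L(t,1))+\int_{t}^{1}\mathrm{Vol}(L(s,1))\,ds\leq C+\int_{t}^{1}\frac{C}{s}\,ds\leq C(1-\ln t),
\]
which is the $u$-part of (\ref{e1}). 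A parallel decomposition produces the uniform bound $\int_{L(t/2,t)}u\leq C$ for (\ref{e2}).

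For the gradient term, the Bochner formula (\ref{BF}), Kato (\ref{K}), and the identity $\tfrac{1}{2}\Delta|\nabla u|^{2}=|\nabla u|\Delta|\nabla u|+|\nabla|\nabla u||^{2}$ combine to give
\[
\Delta|\nabla u|\geq \frac{1}{n-1}\frac{|\nabla|\nabla u||^{2}}{|\nabla u|}-(n-1)|\nabla u|
\]
on $\{|\nabla u|>0\}$, with a standard $\varepsilon$-regularization of $|\nabla u|$ extending it distributionally. Multiplying by $\phi^{2}$, integrating by parts on $K$, and absorbing the cross term $2\phi\langle\nabla\phi,\nabla|\nabla u|\rangle$ via Cauchy--Schwarz produces
\[
\int_{K}\phi^{2}\frac{|\nabla|\nabla u||^{2}}{|\nabla u|}\leq C\int_{K}|\nabla\phi|^{2}|\nabla u|+C\int_{K}\phi^{2}|\nabla u|+C\int_{\Sigma}(|\nabla u|)_{\nu}.
\]
Now (\ref{gradest}) gives $|\nabla u|\leq Cu\leq Ct$ on $L(t/2,t)$, whence $\int_{K}|\nabla\phi|^{2}|\nabla u|\leq (4/t^{2})\cdot Ct\cdot\zeta(t/2)=O(1)$; the second term is at most $C\int_{L(t/2,1)}u\leq C(1-\ln t)$ by (\ref{gradest}) and the preceding paragraph; and the boundary integral is a fixed constant since $u$ is smooth up to $\Sigma$ with $|\nabla u|>0$ there. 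As $\phi\equiv 1$ on $L(t,1)$, this gives the gradient part of (\ref{e1}). Estimate (\ref{e2}) is obtained by repeating the same scheme with $\eta$ supported in $L(t/4,\min(3t/2,(1+t)/2))$, chosen so that $\phi$ vanishes near $\Sigma$; the boundary contribution then drops out and every remaining integral becomes uniformly bounded in $t$.

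The main technical point is that the natural cutoff $\phi=\eta(u)$ must equal $1$ on $\Sigma$ in order to cover $L(t,1)$, forcing one to retain the boundary integrals $\int_{\Sigma}h_{\nu}$ and $\int_{\Sigma}(|\nabla u|)_{\nu}$ in both the Poincar\'e and Bochner applications. These are absolute constants depending only on the fixed data $u$ and $\Sigma$, hence harmless. The secondary issue is the standard regularization of $|\nabla u|$ near its critical set when invoking the Kato inequality.
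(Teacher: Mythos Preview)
Your overall strategy---deducing a volume bound $\mathrm{Vol}(L(t,1))\leq C/t$ from Proposition~\ref{P} and then integrating it via the layer-cake formula---is a clean alternative to the paper's argument, which instead plugs $\phi=u^{1/2}\chi\psi$ into the global Poincar\'e inequality and reads off $\int_{L(t,1)}u$ directly. Both routes are viable and the Bochner/Kato step is handled the same way in each.

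There is, however, a genuine gap. Proposition~\ref{P} requires the test function to have \emph{compact support} in $\overline{K}$, and the paper explicitly warns that the sublevel sets $L(\alpha,\beta)$ ``may be noncompact in general.'' Your cutoff $\phi=\eta(u)$ has support $\{u\geq t/2\}$, which need not be bounded, so neither the application of Proposition~\ref{P} nor the integration by parts against $\Delta|\nabla u|$ is justified as written. This is precisely why the paper carries the additional radial cutoff $\psi$ (equal to $1$ on $B_p(R)$, vanishing outside $B_p(R+1)$) throughout, and uses the $L^2$ decay estimate~(\ref{u}) to show that all terms containing $|\nabla\psi|$ are $O(t^{-2}e^{-2\sqrt{\lambda_1}R})$ and hence disappear as $R\to\infty$.

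The fix is routine but must be stated: replace $\phi=\eta(u)$ by $\phi=\eta(u)\psi$ in both the Poincar\'e and Bochner steps. The extra error in the volume bound is $\int_{L(t/2,1)}|\nabla\psi|^2\leq \mathrm{Vol}(L(t/2,1)\setminus B_p(R))\leq (4/t^2)\int_{M\setminus(D\cup B_p(R))}u^2\leq Ct^{-2}e^{-2\sqrt{\lambda_1}R}$ by~(\ref{u}); the extra error in the Bochner step is handled the same way together with~(\ref{gradest}), exactly as in the paper's derivation of~(\ref{m3}). Once you insert $\psi$ and let $R\to\infty$, your argument goes through and yields both (\ref{e1}) and (\ref{e2}).
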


\begin{proof}
We first prove (\ref{e1}). Let $\psi $ and $\chi $ be the cut-off functions%
\begin{equation*}
\psi \left( x\right) =\left\{ 
\begin{array}{c}
1 \\ 
R+1-r\left( x\right)  \\ 
0%
\end{array}%
\right. 
\begin{array}{l}
\text{on }B_{p}\left( R\right)  \\ 
\text{on }B_{p}\left( R+1\right) \setminus B_{p}\left( R\right)  \\ 
\text{on }M\setminus B_{p}\left( R+1\right) 
\end{array}%
\end{equation*}%
and 
\begin{equation*}
\chi \left( x\right) =\left\{ 
\begin{array}{c}
1 \\ 
\frac{\ln u\left( x\right) -\ln \left( \frac{1}{2}t\right) }{\ln 2} \\ 
0%
\end{array}%
\right. 
\begin{array}{l}
\text{on }L\left( t,1\right)  \\ 
\text{on }L\left( \frac{1}{2}t,t\right)  \\ 
\text{otherwise}%
\end{array}%
.
\end{equation*}%
We extend $u=1$ on $D$, and let $\phi =u^{\frac{1}{2}}\chi \psi $ in 
\begin{equation*}
\lambda _{1}\left( M\right) \int_{M}\phi ^{2}\leq \int_{M}\left\vert \nabla
\phi \right\vert ^{2}
\end{equation*}%
to obtain 
\begin{eqnarray*}
\lambda _{1}\left( M\right) \int_{M}u\chi ^{2}\psi ^{2} &\leq
&2\int_{M}\left\vert \nabla u^{\frac{1}{2}}\right\vert ^{2}\chi ^{2}\psi
^{2}+2\int_{M}u\left\vert \nabla \left( \chi \psi \right) \right\vert ^{2} \\
&\leq &\frac{1}{2}\int_{M}\left\vert \nabla u\right\vert ^{2}u^{-1}\chi
^{2}\psi ^{2}+4\int_{M}u\left\vert \nabla \chi \right\vert ^{2}\psi ^{2} \\
&&+4\int_{M}u\left\vert \nabla \psi \right\vert ^{2}\chi ^{2}.
\end{eqnarray*}%
By (\ref{x5}) we immediately see that 
\begin{equation*}
\int_{M}\left\vert \nabla u\right\vert ^{2}u^{-1}\chi ^{2}\psi ^{2}\leq
\int_{L\left( \frac{1}{2}t,1\right) }\left\vert \nabla u\right\vert
^{2}u^{-1}=\zeta \ln \frac{2}{t}
\end{equation*}%
and that
\begin{equation}
\int_{M}u\left\vert \nabla \chi \right\vert ^{2}\psi ^{2}\leq \frac{1}{\left(\ln 2\right)^2}%
\int_{L\left( \frac{1}{2}t,t\right) }\left\vert \nabla u\right\vert
^{2}u^{-1}= \frac{\zeta}{\ln 2}.  \label{m0}
\end{equation}%
Finally, by (\ref{u}) and (\ref{VolFinite}) we have 
\begin{equation}
\int_{M}u\left\vert \nabla \psi \right\vert ^{2}\chi ^{2}\leq \frac{2}{t}%
\int_{M\setminus B_{p}\left( R\right) }u^{2}\leq \frac{C}{t}e^{-2\sqrt{%
\lambda _{1}\left( M\right) }R}.  \label{m1}
\end{equation}
This proves that 
\begin{equation*}
\int_{L\left( t,1\right) \cap B_{p}\left( R\right) }u\leq C\left( 1-\ln
t\right) +\frac{C}{t}e^{-2\sqrt{\lambda _{1}\left( M\right) }R}
\end{equation*}%
for all $R\geq 1.$ Making $R\rightarrow \infty $ implies that 
\begin{equation}
\int_{L\left( t,1\right) }u\leq C\left( 1-\ln t\right)   \label{m2}
\end{equation}
for all $0<t<1.$

By (\ref{BF}) and (\ref{K}) we have that 
\begin{equation*}
\Delta \left\vert \nabla u\right\vert \geq \frac{1}{n-1}\left\vert \nabla
\left\vert \nabla u\right\vert \right\vert ^{2}\left\vert \nabla
u\right\vert ^{-1}-\left( n-1\right) \left\vert \nabla u\right\vert
\end{equation*}%
on $M\setminus D$. It then follows that 
\begin{eqnarray}
\frac{1}{n-1}\int_{M\setminus D}\left\vert \nabla \left\vert \nabla
u\right\vert \right\vert ^{2}\left\vert \nabla u\right\vert ^{-1}\chi
^{2}\psi ^{2} &\leq &\int_{M\setminus D}\chi ^{2}\psi ^{2}\Delta \left\vert
\nabla u\right\vert  \label{m3} \\
&&+\left( n-1\right) \int_{M\setminus D}\left\vert \nabla u\right\vert \chi
^{2}\psi ^{2}.  \notag
\end{eqnarray}%
Note that by the gradient estimate (\ref{gradest}) and (\ref{m2}%
) we have 
\begin{eqnarray*}
\int_{M\setminus D}\left\vert \nabla u\right\vert \chi ^{2}\psi ^{2} &\leq
&C\int_{L\left( \frac{1}{2}t,1\right) }u \\
&\leq &C\left( 1-\ln t\right) .
\end{eqnarray*}
Moreover, integrating by parts implies that 
\begin{eqnarray*}
\int_{M\setminus D}\chi ^{2}\psi ^{2}\Delta \left\vert \nabla u\right\vert
&=&-\int_{M\setminus D}\left\langle \nabla \left( \chi ^{2}\psi ^{2}\right)
,\nabla \left\vert \nabla u\right\vert \right\rangle +\int_{\ell\left( 1\right)
}\left\vert \nabla u\right\vert _{\nu } \\
&\leq &\frac{1}{2\left( n-1\right) }\int_{M\setminus D}\left\vert \nabla
\left\vert \nabla u\right\vert \right\vert ^{2}\left\vert \nabla
u\right\vert ^{-1}\chi ^{2}\psi ^{2} \\
&&+2\left( n-1\right) \int_{M\setminus D}\left\vert \nabla u\right\vert
\left\vert \nabla \left( \chi \psi \right) \right\vert ^{2}+\int_{\ell\left(
1\right) }\left\vert \nabla u\right\vert _{\nu } \\
&\leq &\frac{1}{2\left( n-1\right) }\int_{M\setminus D}\left\vert \nabla
\left\vert \nabla u\right\vert \right\vert ^{2}\left\vert \nabla
u\right\vert ^{-1}\chi ^{2}\psi ^{2} \\
&&+C\left( 1-\ln t\right) +\frac{C}{t}e^{-2\sqrt{\lambda _{1}\left( M\right) 
}R},
\end{eqnarray*}%
where we have used (\ref{gradest}), (\ref{m0}) and (\ref{m1}) to obtain the last line.
Plugging this estimate in (\ref{m3}) and making $R\rightarrow \infty $ we
obtain that 
\begin{equation*}
\int_{L\left( t,1\right) }\left\vert \nabla \left\vert \nabla u\right\vert
\right\vert ^{2}\left\vert \nabla u\right\vert ^{-1}\leq C\left( 1-\ln
t\right)
\end{equation*}%
as claimed.

The second estimate (\ref{e2}) follows verbatim from the preceding argument
by modifying the function $\chi$ to
\begin{equation*}
\chi \left( x\right) =\left\{ 
\begin{array}{c}
1 \\ 
\frac{\ln u\left( x\right) -\ln \left( \frac{1}{4}t\right) }{\ln 2} \\ 
\frac{\ln \left( 2t\right) -\ln u}{\ln 2} \\ 
0%
\end{array}%
\right. 
\begin{array}{l}
\text{on }L\left( \frac{1}{2}t,t\right) \\ 
\text{on }L\left( \frac{1}{4}t,\frac{1}{2}t\right) \\ 
\text{on }L\left( t,2t\right) \\ 
\text{otherwise}%
\end{array}
.
\end{equation*}%
\end{proof}

We are ready to prove the main result of this section. 

\begin{theorem}
\label{Q}Let $\left( M^{n},g\right) $ be a complete Riemannian manifold of dimension $%
n\geq 5$ with $\mathrm{Ric}\geq -\left( n-1\right) $ and 
\begin{equation*}
\lambda _{1}\left( M\right) \geq \left( \frac{n-2}{n-1}\right) ^{2}\left(
2n-3\right) .
\end{equation*}%
Then for any compact smooth domain $\Omega \subset M,$ 

\begin{equation*}
\frac{2}{3}\sqrt{n}\ \lambda _{1}\left( M\right) \mathrm{Vol}\left( \Omega
\right) \leq \int_{\partial \Omega }\left\vert H\right\vert ^{\frac{2n-3}{n-1}},
\end{equation*}%
where $H$ is the mean curvature of $\partial \Omega.$
\end{theorem}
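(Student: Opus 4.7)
The proof follows the strategy sketched after Theorem \ref{A_Intro}. For \emph{Step one}, with $v>0$ satisfying $\Delta v=-\lambda_{1}(M)v$ and $h=\ln v$, the identity $\Delta h=-\lambda_{1}(M)-|\nabla h|^{2}$ and the divergence theorem give
$$\lambda_{1}(M)\,\mathrm{Vol}(\Omega)\ \leq\ \int_{\Omega}\bigl(\lambda_{1}(M)+|\nabla h|^{2}\bigr)\ =\ -\int_{\Omega}\Delta h\ =\ \int_{\partial\Omega}h_{\nu},$$
where $\nu$ is the inward unit normal of $\partial\Omega$. For \emph{Step two}, let $D=\Omega\cup(\text{finite-volume components of }M\setminus\Omega)$ and let $u$ be the harmonic function on $M\setminus D$ constructed in (\ref{ui}); it satisfies $u=1$ on $\partial D$, $|\nabla u|>0$ on $\Sigma=\partial D$, the exponential decay (\ref{u}) at infinity, and the level-set flux constancy (\ref{x4}).

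\emph{Step three} is the heart of the proof: to establish
$$c(n)\int_{\partial\Omega}h_{\nu}\ \leq\ \int_{\partial\Omega}\bigl(|\nabla u|^{\alpha}\bigr)_{\nu}-\int_{\partial\Omega}\bigl(u^{\beta}\bigr)_{\nu}|\nabla u|^{\alpha},\quad \alpha=\tfrac{n-2}{n-1},\ \beta=\tfrac{n-2}{3n-5}.$$
I would insert the test function $\phi=u^{-\beta/2}|\nabla u|^{\alpha/2}\,\chi(u)\,\psi(r)$, with $\chi$ and $\psi$ the cutoffs from Lemma \ref{Estimates}, into the generalized Poincar\'e inequality of Proposition \ref{P} on $K=M\setminus D$. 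Its boundary term on $\partial D$ contributes exactly $\int_{\partial D}h_{\nu}|\nabla u|^{\alpha}$. Expanding $|\nabla\phi|^{2}$ produces a $\beta^{2}/4$-term in $u^{-\beta-2}|\nabla u|^{\alpha+2}$; an $\alpha^{2}/4$-term in $u^{-\beta}|\nabla u|^{\alpha-2}|\nabla|\nabla u||^{2}$, which is treated via the Bochner formula (\ref{BF}) and the refined Kato inequality (\ref{K}) followed by integration by parts, yielding the boundary flux $\int_{\partial D}(|\nabla u|^{\alpha})_{\nu}$ together with a Ricci remainder $-(n-1)\int u^{-\beta}|\nabla u|^{\alpha}$; and a cross term which, after integration by parts using $\Delta u=0$, contributes $\int_{\partial D}(u^{\beta})_{\nu}|\nabla u|^{\alpha}$. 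The specific values $\alpha=(n-2)/(n-1)$, $\beta=(n-2)/(3n-5)$ are precisely those for which the remaining interior residuals combine into a nonpositive quadratic form once the Kato factor $n/(n-1)$ and the spectral threshold $\lambda_{1}(M)\geq((n-2)/(n-1))^{2}(2n-3)$ absorb the Ricci contribution. The cutoff errors vanish as the spatial radius $R\to\infty$ by Lemma \ref{Estimates} together with (\ref{u}) and (\ref{VolFinite}).

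\emph{Step four} evaluates the boundary terms on $\partial\Omega$. Since $u\equiv1$ and the tangential Laplacian of $u$ vanishes there, harmonicity reads $0=u_{\nu\nu}+Hu_{\nu}$, so $(|\nabla u|)_{\nu}=H|\nabla u|$, giving
$$\bigl(|\nabla u|^{\alpha}\bigr)_{\nu}=\alpha H|\nabla u|^{\alpha},\qquad \bigl(u^{\beta}\bigr)_{\nu}|\nabla u|^{\alpha}=\beta|\nabla u|^{\alpha+1}.$$
Combining with Steps one and three,
$$c(n)\,\lambda_{1}(M)\,\mathrm{Vol}(\Omega)\ \leq\ \alpha\int_{\partial\Omega}H|\nabla u|^{\alpha}-\beta\int_{\partial\Omega}|\nabla u|^{\alpha+1}.$$
A Young inequality $\alpha|H||\nabla u|^{\alpha}\leq A|H|^{\alpha+1}+\beta|\nabla u|^{\alpha+1}$, with $A$ chosen so that the $|\nabla u|^{\alpha+1}$-coefficient matches, cancels the negative term on the right. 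Since $\alpha+1=(2n-3)/(n-1)$, this delivers the inequality of the theorem with the sharp constant $2\sqrt{n}/3$.

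\emph{The main obstacle} lies in Step three: arranging the Bochner--Kato--Poincar\'e manipulation so that every interior remainder cancels exactly at the prescribed values of $\alpha$, $\beta$, and the spectral threshold, while simultaneously producing the two boundary flux terms on the right-hand side. The dimension restriction $n\geq 5$ should emerge from a discriminant condition in this algebraic matching, and the precise value of the constant $2\sqrt{n}/3$ is then determined by the Young balancing in Step four.
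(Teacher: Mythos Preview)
Your overall architecture (Steps one, two, and four) matches the paper, but Step three has a genuine gap that prevents the argument from closing.

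The problem is your choice of test function $\phi=u^{-\beta/2}|\nabla u|^{\alpha/2}\chi\psi$ in Proposition~\ref{P}. On $\partial D=\{u=1\}$ this gives $\phi^{2}=|\nabla u|^{\alpha}$, so the Poincar\'e boundary term is $-\int_{\partial D}h_{\nu}\,|\nabla u|^{\alpha}$, not $-\int_{\partial D}h_{\nu}$. Since $|\nabla u|$ on $\partial D$ depends on the geometry of $\Omega$ and is not controlled by any dimensional constant, there is no way to pass from $\int_{\partial D}h_{\nu}\,|\nabla u|^{\alpha}$ to $c(n)\int_{\partial D}h_{\nu}$ and hence to $\lambda_{1}(M)\,\mathrm{Vol}(\Omega)$ via Step one. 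Moreover, your claim that the $|\nabla|\nabla u||^{2}$ term in $|\nabla\phi|^{2}$, after Bochner--Kato and an integration by parts, produces exactly the boundary flux $\int_{\partial D}(|\nabla u|^{\alpha})_{\nu}$ is not substantiated; that term carries a prefactor $u^{-\beta}|\nabla u|^{\alpha-2}$ which does not integrate by parts into the clean form you need.

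The paper avoids this by decoupling the two ingredients. It first integrates the differential inequality $\Delta|\nabla u|^{\alpha}\geq -(n-2)|\nabla u|^{\alpha}$ (a consequence of Bochner and the refined Kato inequality) against $u^{\beta}\chi^{2}\psi^{2}$; two integrations by parts produce the boundary fluxes $\int_{\ell(1)}(|\nabla u|^{\alpha})_{\nu}$ and $\int_{\ell(1)}(u^{\beta})_{\nu}|\nabla u|^{\alpha}$ together with the interior term $\beta(1-\beta)\int|\nabla u|^{\alpha+2}u^{\beta-2}$. Separately, Proposition~\ref{P} is applied with the pure power $\phi=u^{(\alpha+\beta)/2}\chi\psi$, so that $\phi\equiv 1$ on $\ell(1)$ and the boundary term is exactly $-\int_{\ell(1)}h_{\nu}$. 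The two estimates are linked by a pair of Young inequalities (splitting $|\nabla u|^{\alpha}u^{\beta}$ and $|\nabla u|^{2}u^{\alpha+\beta-2}$ with free constants $A$, $B$), after which the interior residual has the coefficient
\[
\Lambda=\beta(1-\beta)-\frac{(n-2)(\alpha+\beta)^{2}}{(n-1)^{2}\delta^{2}},\qquad \delta^{2}=\frac{4(n-2)^{2}(2n-3)}{(n-1)^{4}},
\]
which vanishes precisely for $\beta=(n-2)/(3n-5)$. The requirement $\beta>1/(n-1)$ needed for the cutoff errors to die is equivalent to $n\geq 5$, which is where the dimension restriction actually enters---not as a discriminant condition in a single quadratic form as you suggest.
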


\begin{proof}
As in (\ref{D}) we let $D$ be the union of $\Omega $ with all the finite volume
components of $M\setminus \Omega.$ Define the harmonic function $u$ on $%
M\setminus D$ as the limit of a subsequence of $\left\{ u_{i}\right\} _{i=1}^\infty$
from (\ref{ui}).

Let $\psi $ and $\chi $ be the cut-off functions%
\begin{equation*}
\psi \left( x\right) =\left\{ 
\begin{array}{c}
1 \\ 
R+1-r\left( x\right)  \\ 
0%
\end{array}%
\right. 
\begin{array}{l}
\text{on }B_{p}\left( R\right)  \\ 
\text{on }B_{p}\left( R+1\right) \setminus B_{p}\left( R\right)  \\ 
\text{on }M\setminus B_{p}\left( R+1\right) 
\end{array}%
\end{equation*}%
and 
\begin{equation*}
\chi \left( x\right) =\left\{ 
\begin{array}{c}
1 \\ 
\frac{\ln u\left( x\right) -\ln \left( \frac{1}{2}t\right) }{\ln 2} \\ 
0%
\end{array}%
\right. 
\begin{array}{l}
\text{on }L\left( t,1\right)  \\ 
\text{on }L\left( \frac{1}{2}t,t\right)  \\ 
\text{otherwise}%
\end{array}%
.
\end{equation*}%
The Bochner formula (\ref{BF}) and the inequality (\ref{K}) imply the
inequality (cf. \cite{LW1}) 
\begin{equation}
\Delta \left\vert \nabla u\right\vert ^{\alpha }\geq -\left( n-2\right)
\left\vert \nabla u\right\vert ^{\alpha },  \label{x8}
\end{equation}%
where 
\begin{equation}
\alpha =\frac{n-2}{n-1}.  \label{alpha}
\end{equation}%
For 
\begin{equation}
\beta >\frac{1}{n-1}  \label{beta}
\end{equation}%
to be specified later, we multiply (\ref{x8}) by $u^{\beta }\chi ^{2}\psi
^{2}$ and integrate it over $M\setminus D$ to obtain that%
\begin{equation}
-\int_{M\setminus D}\left( \Delta \left\vert \nabla u\right\vert ^{\alpha
}\right) u^{\beta }\chi ^{2}\psi ^{2}\leq \left( n-2\right) \int_{M\setminus
D}\left\vert \nabla u\right\vert ^{\alpha }u^{\beta }\chi ^{2}\psi ^{2}.
\label{m4}
\end{equation}%
Integrating by parts one sees that the left side of (\ref{m4}) becomes

\begin{eqnarray*}
-\int_{M\setminus D}\left( \Delta \left\vert \nabla u\right\vert ^{\alpha
}\right) u^{\beta }\chi ^{2}\psi ^{2} &=&\int_{M\setminus D}\left\langle
\nabla \left\vert \nabla u\right\vert ^{\alpha },\nabla u^{\beta
}\right\rangle \chi ^{2}\psi ^{2} \\
&&+\int_{M\setminus D}\left\langle \nabla \left\vert \nabla u\right\vert
^{\alpha },\nabla \chi ^{2}\right\rangle u^{\beta }\psi ^{2} \\
&&+\int_{M\setminus D}\left\langle \nabla \left\vert \nabla u\right\vert
^{\alpha },\nabla \psi ^{2}\right\rangle u^{\beta }\chi ^{2} \\
&&-\int_{\ell\left( 1\right) }\left( \left\vert \nabla u\right\vert ^{\alpha
}\right) _{\nu },
\end{eqnarray*}%
where $\nu =\frac{\nabla u}{\left\vert \nabla u\right\vert }$ is the inward unit
normal to $\partial D=\ell\left( 1\right).$

By Lemma \ref{Estimates}, (\ref{gradest}),  and (\ref{u}) we have that 
\begin{eqnarray*}
\left\vert \int_{M\setminus D}\left\langle \nabla \left\vert \nabla
u\right\vert ^{\alpha },\nabla \psi ^{2}\right\rangle u^{\beta }\chi
^{2}\right\vert  &\leq &2\alpha \int_{L\left( \frac{1}{2}t,1\right)
\setminus B_{p}\left( R\right) }\left\vert \nabla \left\vert \nabla
u\right\vert \right\vert \left\vert \nabla u\right\vert ^{\alpha -1}u^{\beta
} \\
&\leq &Ce^{-\sqrt{\lambda _{1}\left( M\right) }R}\int_{L\left( \frac{1}{2}%
t,1\right) }\left\vert \nabla \left\vert \nabla u\right\vert \right\vert
^{2}\left\vert \nabla u\right\vert ^{-1} \\
&&+Ce^{\sqrt{\lambda _{1}\left( M\right) }R}\int_{L\left( \frac{1}{2}%
t,1\right) \setminus B_{p}\left( R\right) }u^{2\left( \alpha +\beta \right)
-1} \\
&\leq& Ce^{-\sqrt{\lambda _{1}\left( M\right) }R}\left( 1-\ln t\right)\\
&&+ \frac{C}{t} e^{\sqrt{\lambda _{1}\left( M\right) }R}\int_{L\left( \frac{1}{2}%
t,1\right) \setminus B_{p}\left( R\right) }u^2\\
&\leq &\frac{C}{t}e^{-\sqrt{\lambda _{1}\left( M\right) }R}.
\end{eqnarray*}%
Moreover, Lemma \ref{Estimates} also implies that 
\begin{eqnarray*}
\left\vert \int_{M\setminus D}\left\langle \nabla \left\vert \nabla
u\right\vert ^{\alpha },\nabla \chi ^{2}\right\rangle u^{\beta }\psi
^{2}\right\vert  &\leq &C\int_{L\left( \frac{1}{2}t,t\right) }\left\vert
\nabla \left\vert \nabla u\right\vert \right\vert \left\vert \nabla
u\right\vert ^{\alpha }u^{\beta -1} \\
&\leq &Ct^{\beta -\frac{1}{n-1}}\int_{L\left( \frac{1}{2}t,t\right)
}\left\vert \nabla \left\vert \nabla u\right\vert \right\vert ^{2}\left\vert
\nabla u\right\vert ^{-1} \\
&&+\frac{C}{t^{\beta -\frac{1}{n-1}}}\int_{L\left( \frac{1}{2}t,t\right)
}\left\vert \nabla u\right\vert ^{1+2\alpha }u^{2\beta -2} \\
&\leq &Ct^{\beta -\frac{1}{n-1}}+\frac{C}{t^{\beta -\frac{1}{n-1}}}%
\int_{L\left( \frac{1}{2}t,t\right) }\left\vert \nabla u\right\vert
^{2}u^{2\left( \alpha +\beta \right) -3},
\end{eqnarray*}%
where in the last line we have applied (\ref{gradest}). On the other hand, by (%
\ref{alpha}), (\ref{beta}), and (\ref{x5}) we get%
\begin{eqnarray*}
\int_{L\left( \frac{1}{2}t,t\right) }\left\vert \nabla u\right\vert
^{2}u^{2\left( \alpha +\beta \right) -3} &=&\zeta \int_{\frac{1}{2}%
t}^{t}r^{2\beta -\frac{n+1}{n-1}}dr \\
&\leq &\frac{1}{2\left( \beta -\frac{1}{n-1}\right) }t^{2\left( \beta -\frac{%
1}{n-1}\right) }\zeta .
\end{eqnarray*}%
In conclusion, 
\begin{equation*}
\left\vert \int_{M\setminus D}\left\langle \nabla \left\vert \nabla
u\right\vert ^{\alpha },\nabla \chi ^{2}\right\rangle u^{\beta }\psi
^{2}\right\vert \leq Ct^{\beta -\frac{1}{n-1}}.
\end{equation*}

Hence, this proves that 
\begin{eqnarray}
-\int_{M\setminus D}\left( \Delta \left\vert \nabla u\right\vert ^{\alpha
}\right) u^{\beta }\chi ^{2}\psi ^{2} &\geq &\int_{M\setminus D}\left\langle
\nabla \left\vert \nabla u\right\vert ^{\alpha },\nabla u^{\beta
}\right\rangle \chi ^{2}\psi ^{2}  \label{m5} \\
&&-\int_{\ell\left( 1\right) }\left( \left\vert \nabla u\right\vert ^{\alpha
}\right) _{\nu }  \notag \\
&&-\frac{C}{t}e^{-\sqrt{\lambda _{1}\left( M\right) }\,R}-Ct^{\beta -\frac{1}{%
n-1}}.  \notag
\end{eqnarray}

We now estimate the first term on the right hand side. Integration by parts implies that 
\begin{eqnarray*}
\int_{M\setminus D}\left\langle \nabla \left\vert \nabla u\right\vert
^{\alpha },\nabla u^{\beta }\right\rangle \chi ^{2}\psi ^{2}
&=&-\int_{M\setminus D}\left\vert \nabla u\right\vert ^{\alpha }\left(
\Delta u^{\beta }\right) \chi ^{2}\psi ^{2} \\
&&-\int_{M\setminus D}\left\vert \nabla u\right\vert ^{\alpha }\left\langle
\nabla u^{\beta },\nabla \left( \chi ^{2}\psi ^{2}\right) \right\rangle  \\
&&+\int_{\ell\left( 1\right) }\left( u^{\beta }\right) _{\nu }\left\vert \nabla
u\right\vert ^{\alpha }.
\end{eqnarray*}%
By (\ref{u}) and (\ref{gradest}) we have that 
\begin{eqnarray*}
\int_{M\setminus D}\left\vert \nabla u\right\vert ^{\alpha }\left\vert
\left\langle \nabla u^{\beta },\nabla \psi ^{2}\right\rangle \chi
^{2}\right\vert  &\leq &C\int_{L\left( \frac{1}{2}t,1\right) \setminus
B_{p}\left( R\right) }u^{\alpha +\beta }\chi ^{2} \\
&\leq &\frac{C}{t^{2-\left( \alpha +\beta \right) }}\int_{L\left( \frac{1}{2}%
t,1\right) \setminus B_{p}\left( R\right) }u^{2} \\
&\leq &\frac{C}{t}e^{-2\sqrt{\lambda _{1}\left( M\right) }R}.
\end{eqnarray*}%
Moreover, (\ref{x5}) and (\ref{gradest}) imply that%
\begin{eqnarray*}
\int_{M\setminus D}\left\vert \nabla u\right\vert ^{\alpha }\left\langle
\nabla u^{\beta },\nabla \chi ^{2}\right\rangle \psi ^{2} 
&\leq& C \int_{L\left( \frac{1}{2}t,t\right) }\left\vert \nabla u\right\vert
^{\alpha+2}u^{\beta -2}\\
&\leq&C\int_{L\left( \frac{1}{2}t,t\right) }\left\vert \nabla u\right\vert
^{2}u^{\beta -\frac{n}{n-1}} \\
&\leq &\frac{C}{\beta -\frac{1}{n-1}}t^{\left( \beta -\frac{1}{n-1}\right)
}\zeta .
\end{eqnarray*}%
Plugging these estimates in (\ref{m5}) yields that 
\begin{eqnarray}
-\int_{M\setminus D}\left( \Delta \left\vert \nabla u\right\vert ^{\alpha
}\right) u^{\beta }\chi ^{2}\psi ^{2} &\geq &\beta \left( 1-\beta \right)
\int_{M\setminus D}\left\vert \nabla u\right\vert ^{\alpha +2}u^{\beta
-2}\chi ^{2}\psi ^{2}  \label{m6} \\
&&-\int_{\ell\left( 1\right) }\left( \left\vert \nabla u\right\vert ^{\alpha
}\right) _{\nu }+\int_{\ell\left(1\right) }\left( u^{\beta }\right) _{\nu
}\left\vert \nabla u\right\vert ^{\alpha }  \notag \\
&&-\frac{C}{t}e^{-\sqrt{\lambda _{1}\left( M\right) }R}-Ct^{\beta -\frac{1}{%
n-1}}.  \notag
\end{eqnarray}

Therefore (\ref{m4}) becomes that
\begin{eqnarray}
\left( n-2\right) \int_{M\setminus D}\left\vert \nabla u\right\vert ^{\alpha
}u^{\beta }\chi ^{2}\psi ^{2} &\geq &\beta \left( 1-\beta \right)
\int_{M\setminus D}\left\vert \nabla u\right\vert ^{\alpha +2}u^{\beta
-2}\chi ^{2}\psi ^{2}  \label{m7} \\
&&-\int_{\ell\left( 1\right) }\left( \left\vert \nabla u\right\vert ^{\alpha
}\right) _{\nu }+\int_{\ell\left( 1\right) }\left( u^{\beta }\right) _{\nu
}\left\vert \nabla u\right\vert ^{\alpha }  \notag \\
&&-\frac{C}{t}e^{-\sqrt{\lambda _{1}\left( M\right) }R}-Ct^{\beta -\frac{1}{%
n-1}}.  \notag
\end{eqnarray}%
We now estimate the left hand side. By Young's inequality we have that  
\begin{equation*}
\left\vert \nabla u\right\vert ^{\alpha }\leq \frac{\alpha }{\alpha +2}%
A^{-2}\left\vert \nabla u\right\vert ^{\alpha +2}u^{-2}+\frac{2}{\alpha +2}%
A^{\alpha }u^{\alpha },
\end{equation*}%
where $A>0$ is a constant to be specified later. Hence, we obtain 
\begin{eqnarray*}
\int_{M\setminus D}\left\vert \nabla u\right\vert ^{\alpha }u^{\beta }\chi
^{2}\psi ^{2} &\leq &\frac{\alpha }{\alpha +2}A^{-2}\int_{M\setminus
D}\left\vert \nabla u\right\vert ^{\alpha +2}u^{\beta -2}\chi ^{2}\psi ^{2}
\\
&&+\frac{2}{\alpha +2}A^{\alpha }\int_{M\setminus D}u^{\alpha +\beta }\chi
^{2}\psi ^{2}.
\end{eqnarray*}%
Plugging this into (\ref{m7}) yields 
\begin{eqnarray}
\Lambda _{1}\int_{M\setminus D}\left\vert \nabla u\right\vert ^{\alpha
+2}u^{\beta -2}\chi ^{2}\psi ^{2} &\leq &\Lambda _{2}\int_{M\setminus
D}u^{\alpha +\beta }\chi ^{2}\psi ^{2}  \label{m8} \\
&&+\int_{\ell\left( 1\right) }\left( \left\vert \nabla u\right\vert ^{\alpha
}\right) _{\nu }-\int_{\ell\left( 1\right) }\left( u^{\beta }\right) _{\nu
}\left\vert \nabla u\right\vert ^{\alpha }  \notag \\
&&+\frac{C}{t}e^{-\sqrt{\lambda _{1}\left( M\right) }R}+Ct^{\beta -\frac{1}{%
n-1}},  \notag
\end{eqnarray}%
where 
\begin{eqnarray*}
\Lambda _{1} &=&\beta \left( 1-\beta \right) -\frac{\alpha \left( n-2\right) 
}{\alpha +2}A^{-2} \\
\Lambda _{2} &=&\frac{2\left( n-2\right) }{\alpha +2}A^{\alpha }.
\end{eqnarray*}%
We apply Proposition \ref{P} for $K=M\setminus D$ and  
$\phi =u^{\frac{\alpha +\beta }{2}}\chi \psi$. As $\partial K=\ell(1)$,  and $\nu= \frac{\nabla u}{\vert \nabla u\vert}$ is the
outward unit normal to $K$,
we obtain that 
\begin{equation*}
\lambda _{1}\left( M\right) \int_{M\setminus D}u^{\alpha +\beta }\chi
^{2}\psi ^{2}\leq \int_{M\setminus D}\left\vert \nabla \left( u^{\frac{%
\alpha +\beta }{2}}\chi \psi \right) \right\vert ^{2}-\int_{\ell\left( 1\right)
}h_{\nu }.
\end{equation*}%
Note that 
\begin{eqnarray*}
\int_{M\setminus D}\left\vert \nabla \left( u^{\frac{\alpha +\beta }{2}}\chi
\psi \right) \right\vert ^{2} &=&\frac{\left( \alpha +\beta \right) ^{2}}{4}%
\int_{M\setminus D}\left\vert \nabla u\right\vert ^{2}u^{\alpha +\beta
-2}\chi ^{2}\psi ^{2} \\
&&+\int_{M\setminus D}u^{\alpha +\beta }\left\vert \nabla \left( \chi \psi
\right) \right\vert ^{2} \\
&&+\frac{1}{2}\int_{M\setminus D}\left\langle \nabla u^{\alpha +\beta
},\nabla \left( \chi \psi \right) ^{2}\right\rangle .
\end{eqnarray*}%
By (\ref{u}), (\ref{alpha}), and (\ref{beta}) we conclude that 
\begin{eqnarray*}
\int_{M\setminus D}u^{\alpha +\beta }\left\vert \nabla \psi \right\vert
^{2}\chi ^{2} &\leq &\int_{L\left( \frac{1}{2}t,1\right) \setminus
B_{p}\left( R\right) }u^{\alpha +\beta }\chi ^{2} \\
&\leq &\frac{2}{t}\int_{L\left( \frac{1}{2}t,1\right) \setminus B_{p}\left(
R\right) }u^{2}\chi ^{2} \\
&\leq &\frac{C}{t}e^{-2\sqrt{\lambda _{1}\left( M\right) }R}.
\end{eqnarray*}%
Similarly, by additionally using (\ref{gradest}) we get that 
\begin{equation*}
\frac{1}{2}\int_{M\setminus D}\left\langle \nabla u^{\alpha +\beta },\nabla
\psi ^{2}\right\rangle \chi ^{2}\leq \frac{C}{t}e^{-2\sqrt{\lambda
_{1}\left( M\right) }R}.
\end{equation*}%
By (\ref{x5}), (\ref{alpha}), and (\ref{beta}) we have 
\begin{eqnarray*}
\int_{M}u^{\alpha +\beta }\left\vert \nabla \chi \right\vert ^{2}\psi ^{2}
&\leq &C\int_{L\left( \frac{1}{2}t,t\right) }\left\vert \nabla u\right\vert
^{2}u^{\beta -\frac{n}{n-1}} \\
&\leq &\frac{C}{\beta -\frac{1}{n-1}}t^{\beta -\frac{1}{n-1}}.
\end{eqnarray*}%
Similarly, 
\begin{equation*}
\int_{M}\left\langle \nabla u^{\alpha +\beta },\nabla \chi ^{2}\right\rangle
\psi ^{2}\leq \frac{C}{\beta -\frac{1}{n-1}}t^{\beta -\frac{1}{n-1}}.
\end{equation*}%
Combining all these estimates we arrive at
\begin{eqnarray}
&&\lambda _{1}\left( M\right) \int_{M\setminus D}u^{\alpha +\beta }\chi
^{2}\psi ^{2}  \label{m9} \\
&\leq &\frac{\left( \alpha +\beta \right) ^{2}}{4}\int_{M\setminus
D}\left\vert \nabla u\right\vert ^{2}u^{\alpha +\beta -2}\chi ^{2}\psi ^{2} 
\notag \\
&&-\int_{\ell\left( 1\right) }h_{\nu }+\frac{C}{t}e^{-2\sqrt{\lambda _{1}\left(
M\right) }R}+Ct^{\beta -\frac{1}{n-1}}.  \notag
\end{eqnarray}
By Young's inequality, 
\begin{equation*}
\left\vert \nabla u\right\vert ^{2}\leq \frac{2}{\alpha +2}B^{-\alpha
}\left\vert \nabla u\right\vert ^{\alpha +2}u^{-\alpha }+\frac{\alpha }{%
\alpha +2}B^{2}u^{2}
\end{equation*}%
for a constant $B>0$ to be specified later. This yields 
\begin{eqnarray*}
\int_{M\setminus D}\left\vert \nabla u\right\vert ^{2}u^{\alpha +\beta
-2}\chi ^{2}\psi ^{2} &\leq &\frac{2}{\alpha +2}B^{-\alpha }\int_{M\setminus
D}\left\vert \nabla u\right\vert ^{\alpha +2}u^{\beta -2}\chi ^{2}\psi ^{2}
\\
&&+\frac{\alpha }{\alpha +2}B^{2}\int_{M\setminus D}u^{\alpha +\beta }\chi
^{2}\psi ^{2}.
\end{eqnarray*}%
Plugging this into (\ref{m9}), one concludes that

\begin{eqnarray*}
&&\left( \lambda _{1}\left( M\right) -\frac{\alpha }{\alpha +2}\frac{\left(
\alpha +\beta \right) ^{2}}{4}B^{2}\right) \int_{M\setminus D}u^{\alpha
+\beta }\chi ^{2}\psi ^{2} \\
&\leq &\frac{\left( \alpha +\beta \right) ^{2}}{2\left( \alpha +2\right) }%
B^{-\alpha }\int_{M\setminus D}\left\vert \nabla u\right\vert ^{\alpha
+2}u^{\beta -2}\chi ^{2}\psi ^{2} \\
&&-\int_{\ell\left( 1\right) }h_{\nu }+\frac{C}{t}e^{-2\sqrt{\lambda _{1}\left(
M\right) }R}+Ct^{\beta -\frac{1}{n-1}}.
\end{eqnarray*}

By the assumption, 
\begin{equation*}
\lambda _{1}\left( M\right) \geq \frac{\left( n-1\right) ^{2}\delta ^{2}}{4}
\end{equation*}%
with
\begin{equation}
\delta =\frac{2\left( n-2\right) }{\left( n-1\right) ^{2}}\sqrt{2n-3}.
\label{delta}
\end{equation}%
Therefore,

\begin{eqnarray*}
&&\left( \frac{\left( n-1\right) ^{2}\delta ^{2}}{4}-\frac{\alpha }{\alpha +2%
}\frac{\left( \alpha +\beta \right) ^{2}}{4}B^{2}\right) \int_{M\setminus
D}u^{\alpha +\beta }\chi ^{2}\psi ^{2} \\
&\leq &\frac{\left( \alpha +\beta \right) ^{2}}{2\left( \alpha +2\right) }%
B^{-\alpha }\int_{M\setminus D}\left\vert \nabla u\right\vert ^{\alpha
+2}u^{\beta -2}\chi ^{2}\psi ^{2} \\
&&-\int_{\ell\left( 1\right) }h_{\nu }+\frac{C}{t}e^{-2\sqrt{\lambda _{1}\left(
M\right) }R}+Ct^{\beta -\frac{1}{n-1}}.
\end{eqnarray*}%
We optimize this inequality by choosing 
\begin{equation*}
B=\frac{\left( n-1\right) \delta }{\alpha +\beta }
\end{equation*}%
and obtain that 
\begin{eqnarray*}
\int_{M\setminus D}u^{\alpha +\beta }\chi ^{2}\psi ^{2} &\leq &\left( \frac{%
\alpha +\beta }{\left( n-1\right) \delta }\right) ^{\alpha
+2}\int_{M\setminus D}\left\vert \nabla u\right\vert ^{\alpha +2}u^{\beta
-2}\chi ^{2}\psi ^{2} \\
&&-\frac{2\left( \alpha +2\right) }{\left( n-1\right) ^{2}\delta ^{2}}%
\int_{\ell\left( 1\right) }h_{\nu } \\
&&+\frac{C}{t}e^{-2\sqrt{\lambda _{1}\left( M\right) }R}+Ct^{\beta -\frac{1}{%
n-1}}.
\end{eqnarray*}

Plugging this into (\ref{m8}), we conclude that

\begin{eqnarray}
\Lambda \int_{M\setminus D}\left\vert \nabla u\right\vert ^{\alpha
+2}u^{\beta -2}\chi ^{2}\psi ^{2} &\leq &-\frac{2\left( \alpha +2\right) }{%
\left( n-1\right) ^{2}\delta ^{2}}\Lambda _{2}\int_{l\left( 1\right) }h_{\nu
}  \label{m10} \\
&&+\int_{\ell\left( 1\right) }\left( \left\vert \nabla u\right\vert ^{\alpha
}\right) _{\nu }-\int_{\ell\left( 1\right) }\left( u^{\beta }\right) _{\nu
}\left\vert \nabla u\right\vert ^{\alpha }  \notag \\
&&+\frac{C}{t}e^{-\sqrt{\lambda _{1}\left( M\right) }R}+Ct^{\beta -\frac{1}{%
n-1}},  \notag
\end{eqnarray}%
where%
\begin{equation*}
\Lambda _{2}=\frac{2\left( n-2\right) }{\alpha +2}A^{\alpha }
\end{equation*}%
and 
\begin{eqnarray*}
\Lambda  &=&\Lambda _{1}-\left( \frac{\alpha +\beta }{\left( n-1\right)
\delta }\right) ^{\alpha +2}\Lambda _{2} \\
&=&\beta \left( 1-\beta \right) -\frac{\alpha \left( n-2\right) }{\alpha +2}%
A^{-2} \\
&&-\left( \frac{\alpha +\beta }{\left( n-1\right) \delta }\right) ^{\alpha
+2}\frac{2\left( n-2\right) }{\alpha +2}A^{\alpha }.
\end{eqnarray*}%
We optimize $\Lambda $ by choosing 
\begin{equation*}
A=\frac{\left( n-1\right) \delta }{\alpha +\beta }
\end{equation*}%
and obtain that 
\begin{equation}
\Lambda =\beta \left( 1-\beta \right) -\frac{\left( n-2\right) \left( \alpha
+\beta \right) ^{2}}{\left( n-1\right) ^{2}\delta ^{2}}.  \label{Lambda}
\end{equation}
Hence, (\ref{m10}) becomes

\begin{eqnarray}
\Lambda \int_{M\setminus D}\left\vert \nabla u\right\vert ^{\alpha
+2}u^{\beta -2}\chi ^{2}\psi ^{2} &\leq &-\frac{4\left( n-2\right) }{\left(
\alpha +\beta \right) ^{\alpha }\left( \left( n-1\right) \delta \right)
^{2-\alpha }}\int_{\ell\left( 1\right) }h_{\nu }  \label{m101} \\
&&+\int_{\ell\left( 1\right) }\left( \left\vert \nabla u\right\vert ^{\alpha
}\right) _{\nu }-\int_{\ell\left( 1\right) }\left( u^{\beta }\right) _{\nu
}\left\vert \nabla u\right\vert ^{\alpha }  \notag \\
&&+\frac{C}{t}e^{-\sqrt{\lambda _{1}\left( M\right) }R}+Ct^{\beta -\frac{1}{%
n-1}}.  \notag
\end{eqnarray}

Recall that 
\begin{eqnarray*}
\alpha  &=&\frac{n-2}{n-1} \\
\delta ^{2} &=&\frac{4\left( n-2\right) ^{2}}{\left( n-1\right) ^{4}}\left(
2n-3\right) ,
\end{eqnarray*}%
as specified in (\ref{alpha}) and (\ref{delta}). We let 
\begin{equation}
\beta =\frac{n-2}{3n-5}.  \label{beta1}
\end{equation}%
Note that for any $n\geq 5$ we have $\beta >\frac{1}{n-1}$, as required by (\ref%
{beta}). 

Furthermore, it follows that $\Lambda =0$ by direct calculation. Consequently, (\ref{m101}) reduces to

\begin{eqnarray}
0 &\leq &-\frac{4\left( n-2\right) }{\left( \alpha +\beta \right) ^{\alpha
}\left( \left( n-1\right) \delta \right) ^{2-\alpha }}\int_{\ell\left( 1\right)
}h_{\nu }  \label{m11} \\
&&+\int_{\ell\left( 1\right) }\left( \left\vert \nabla u\right\vert ^{\alpha
}\right) _{\nu }-\int_{\ell\left( 1\right) }\left( u^{\beta }\right) _{\nu
}\left\vert \nabla u\right\vert ^{\alpha }  \notag \\
&&+\frac{C}{t}e^{-\sqrt{\lambda _{1}\left( M\right) }R}+Ct^{\beta -\frac{1}{%
n-1}}.  \notag
\end{eqnarray}

However, by (\ref{x2}),
\begin{eqnarray*}
\lambda _{1}\left( M\right) \mathrm{Vol}\left( D\cap B_{p}\left( R\right)
\right) &\leq &\lambda _{1}\left( M\right) \int_{D}\psi ^{2} \\
&=&-\int_{D}\left( \Delta h+\left\vert \nabla h\right\vert ^{2}\right) \psi
^{2} \\
&=&\int_{\ell\left( 1\right) }h_{\nu }+\int_{D}\left\langle \nabla h,\nabla
\psi ^{2}\right\rangle \\
&&-\int_{D}\left\vert \nabla h\right\vert ^{2}\psi ^{2} \\
&\leq &\int_{\ell\left( 1\right) }h_{\nu }+\int_{D}\left\vert \nabla \psi
\right\vert ^{2},
\end{eqnarray*}%
where $\nu =\frac{\nabla u}{\left\vert \nabla u\right\vert }$ is the inward
unit normal to $\partial D=\ell\left( 1\right).$ According to (\ref{VolFinite}) it
follows that 
\begin{equation*}
\lambda _{1}\left( M\right) \mathrm{Vol}\left( D\cap B_{p}\left( R\right)
\right) \leq \int_{\ell\left( 1\right) }h_{\nu }+Ce^{-2\sqrt{\lambda _{1}\left(
M\right) }R}.
\end{equation*}%
Making $R\rightarrow \infty $ yields 
\begin{equation*}
\lambda _{1}\left( M\right) \mathrm{Vol}\left( D\right) \leq \int_{\ell\left(
1\right) }h_{\nu }.
\end{equation*}%
As $\Omega \subset D,$ we conclude from (\ref{m11}) that 
\begin{eqnarray}
&&\frac{4\left( n-2\right) \lambda _{1}\left( M\right) }{\left( \alpha
+\beta \right) ^{\alpha }\left( \left( n-1\right) \delta \right) ^{2-\alpha }%
}\mathrm{Vol}\left( \Omega \right)  \label{m12} \\
&\leq &\int_{\ell\left( 1\right) }\left( \left\vert \nabla u\right\vert
^{\alpha }\right) _{\nu }-\int_{\ell\left( 1\right) }\left( u^{\beta }\right)
_{\nu }\left\vert \nabla u\right\vert ^{\alpha }  \notag \\
&&+\frac{C}{t}e^{-\sqrt{\lambda _{1}\left( M\right) }R}+Ct^{\beta -\frac{1}{%
n-1}}.  \notag
\end{eqnarray}

Letting $R\rightarrow \infty $ first and then $t\rightarrow 0$ in (\ref{m12})
we arrive at 
\begin{equation*}
\frac{4\left( n-2\right) \lambda _{1}\left( M\right) }{\left( \alpha +\beta
\right) ^{\alpha }\left( \left( n-1\right) \delta \right) ^{2-\alpha }}%
\mathrm{Vol}\left( \Omega \right) \leq \int_{\ell\left( 1\right) }\left(
\left\vert \nabla u\right\vert ^{\alpha }\right) _{\nu }-\int_{\ell\left(
1\right) }\left( u^{\beta }\right) _{\nu }\left\vert \nabla u\right\vert
^{\alpha }.
\end{equation*}%
Note that the mean curvature of 
\begin{equation*}
\Sigma =\ell\left( 1\right) =\partial \left( M\setminus D\right) 
\end{equation*}
satisfies%
\begin{equation*}
H_{\Sigma }=-\frac{\left\langle \nabla \left\vert \nabla u\right\vert
,\nabla u\right\rangle }{\left\vert \nabla u\right\vert ^{2}}.
\end{equation*}%
Hence,

\begin{equation*}
\left( \left\vert \nabla u\right\vert ^{\alpha }\right) _{\nu }=-\alpha
H_{\Sigma }\left\vert \nabla u\right\vert ^{\alpha }.
\end{equation*}%
This proves that 
\begin{equation*}
\frac{4\left( n-2\right) \lambda _{1}\left( M\right) }{\left( \alpha +\beta
\right) ^{\alpha }\left( \left( n-1\right) \delta \right) ^{2-\alpha }}%
\mathrm{Vol}\left( \Omega \right) \leq -\alpha \int_{\Sigma }H_{\Sigma
}\left\vert \nabla u\right\vert ^{\alpha }-\beta \int_{\Sigma }\left\vert
\nabla u\right\vert ^{\alpha +1}.
\end{equation*}%
From Young's inequality that
\begin{equation*}
\alpha \left\vert H_{\Sigma }\right\vert \left\vert \nabla u\right\vert
^{\alpha }\leq \beta \left\vert \nabla u\right\vert ^{\alpha +1}+\frac{%
\alpha }{\alpha +1}\left( \frac{\alpha ^{2}}{\left( \alpha +1\right) \beta }%
\right) ^{\alpha }\left\vert H_{\Sigma }\right\vert ^{\alpha +1},
\end{equation*}%
we conclude 

\begin{equation}
\Gamma \,\mathrm{Vol}\left( \Omega \right) \leq \int_{\Sigma }\left\vert
H_{\Sigma }\right\vert ^{\alpha +1}\leq \int_{\partial \Omega }\left\vert
H\right\vert ^{\alpha +1},  \label{m13}
\end{equation}%
where%
\begin{equation*}
\Gamma =\frac{4\left( n-2\right) }{\left( \alpha +\beta \right) ^{\alpha
}\left( \left( n-1\right) \delta \right) ^{2-\alpha }}\frac{\alpha +1}{%
\alpha }\left( \frac{\left( \alpha +1\right) \beta }{\alpha ^{2}}\right)
^{\alpha }\lambda _{1}\left( M\right) 
\end{equation*}%
and
\begin{eqnarray*}
\alpha  &=&\frac{n-2}{n-1},\ \ \ \ \beta =\frac{n-2}{3n-5}, \\
\delta  &=&\frac{2\left( n-2\right) }{\left( n-1\right) ^{2}}\sqrt{2n-3}.
\end{eqnarray*}%
For $n\geq 5$ we have that 
\begin{eqnarray*}
\delta  &<&1, \\
\ \ \alpha +\beta  &<&\frac{4}{3}, \\
\frac{\left( \alpha +1\right) \beta }{\alpha ^{2}} &>&\frac{1}{2}, \\
\ \ \left( n-1\right) ^{2-\alpha } &<&\sqrt{2}\left( n-1\right) .
\end{eqnarray*}%
Therefore,
\begin{equation*}
\Gamma >\frac{2}{3}\sqrt{n}\ \lambda _{1}\left( M\right).
\end{equation*}%
In conclusion, by (\ref{m13}) we have 
\begin{equation*}
\frac{2}{3}\sqrt{n}\ \lambda _{1}\left( M\right) \mathrm{Vol}\left( \Omega
\right) \leq \int_{\partial \Omega }\left\vert H\right\vert ^{\frac{2n-3}{n-1%
}}.
\end{equation*}
\end{proof}

\section{Splitting of Ricci solitons}

In this section, we address the issue of nonexistence of compact minimal hypersurfaces 
in Ricci solitons. We begin with the case of steady solitons.
Let $\left( M,g,f\right) $ be a gradient steady Ricci soliton. Then the potential 
$f$ satisfies the soliton equation 
\begin{equation*}
\mathrm{Ric}+\mathrm{Hess}\left( f\right) =0.
\end{equation*}%
It is known \cite{H} that $f$ may be normalized so that 
\begin{equation*}
S+\left\vert \nabla f\right\vert ^{2}=1,
\end{equation*}%
where $S$ is the scalar curvature. It is also known \cite{Chen} that $S>0$ unless $\left(
M,g\right) $ is Ricci flat.

\begin{theorem}
\label{Steady}Let $\left( M^{n},g,f\right) $ be a steady Ricci soliton.
Assume that there exists a smooth compact embedded minimal hypersurface $\Sigma $ in $M.$
Then $\left( M,g\right) $ splits isometrically as a direct product $\mathbb{R}\times \Sigma.$
\end{theorem}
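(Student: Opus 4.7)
My plan is to adapt the argument of Theorem \ref{A_Intro} to the weighted setting, replacing $\Delta$ by the drift Laplacian $\Delta_f$ and integrating against the measure $e^{-f}dv$. The key input, recalled in the paragraph following Theorem \ref{Steady0}, is that any steady soliton satisfies $\mathrm{Ric}_f = 0$, $|\nabla f| \leq 1$, and $\lambda_1(\Delta_f) = \tfrac{1}{4}$. First I would produce $v>0$ with $\Delta_f v = -\tfrac{1}{4}v$, set $h = \ln v$, and obtain the analog of (\ref{x2}), namely $\Delta_f h = -\tfrac{1}{4} - |\nabla h|^2$, together with the weighted Yau gradient estimate $|\nabla h| \leq \tfrac{1}{2}$.

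Next, $\Sigma$ separates $M$ into components, and I would reduce to the case that some component $D$ bounded by $\Sigma$ has finite weighted volume (if instead $\Sigma$ separates $M$ into two $f$-nonparabolic ends, one applies the weighted analog of Theorem \ref{LW} to split $M$ directly). Integration by parts against $e^{-f}dv$ on $D$ gives
\begin{equation*}
\tfrac{1}{4}\,\mathrm{Vol}_f(D) \;\leq\; \int_D \bigl(\tfrac{1}{4} + |\nabla h|^2\bigr)\,e^{-f}\,dv \;=\; \int_\Sigma h_\nu\,e^{-f}\,d\sigma,
\end{equation*}
where $\nu$ is the inward unit normal to $D$. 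On the complementary side $E = M\setminus D$, I would construct the $f$-harmonic limit $u$ of Dirichlet solutions on $B_p(R)\cap E$ with $u=1$ on $\Sigma$ and $u=0$ on $\partial B_p(R)$, in direct analogy with (\ref{ui}). The weighted Bochner identity now reads $\tfrac{1}{2}\Delta_f |\nabla u|^2 = |\nabla^2 u|^2$, with no curvature penalty since $\mathrm{Ric}_f = 0$. Combined with the Kato inequality (\ref{K}), this permits replaying the chain of energy estimates of Lemma \ref{Estimates} and Theorem \ref{Q}, weighted by $e^{-f}$, to produce an inequality of the form
\begin{equation*}
c\,\mathrm{Vol}_f(D) \;\leq\; \int_\Sigma |H|^{\gamma}\,e^{-f}\,d\sigma
\end{equation*}
for explicit $c,\gamma > 0$.

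Since $\Sigma$ is minimal, the right-hand side vanishes, forcing equality in every step of the derivation. Equality in the Bochner and Kato inequalities forces $\nabla^2 u$ to be rank one with distinguished direction $\nabla u$, so $|\nabla u|$ is constant on level sets and $\nabla u/|\nabla u|$ is a parallel unit vector field on $E$; equality in the weighted Poincaré inequality forced by Proposition \ref{P} in turn pins $\nabla h$ to be parallel to $\nabla u$; and the Young inequalities in the Theorem \ref{Q} chain fix the relevant proportionality constants. These rigidity conclusions propagate across $\Sigma$ (totally geodesic by equality) to yield a globally parallel unit vector field on $M$ and hence the isometric splitting $M = \mathbb{R}\times\Sigma$. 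The principal obstacle is this final equality analysis: verifying that the rank-one Hessian and parallel transport conclusions obtained on $E$ do extend through $\Sigma$, and that the resulting product structure is genuinely compatible with the steady soliton equation $\mathrm{Ric} = -\mathrm{Hess}(f)$.
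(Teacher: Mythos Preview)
Your proposal has a logical gap at the crucial step. If you succeed in proving $c\,\mathrm{Vol}_f(D) \leq \int_\Sigma |H|^\gamma e^{-f}$ with $c>0$, then minimality of $\Sigma$ forces $\mathrm{Vol}_f(D)\le 0$, which is a \emph{contradiction} (since $D$ is a nonempty open set), not an equality case to be traced back. The Bochner/Kato rigidity you describe would take place on $E=M\setminus D$, yet the quantity forced to vanish is $\mathrm{Vol}_f(D)$, living on the other side of $\Sigma$; no information about $\nabla^2 u$ on $E$ is extracted. At best the argument becomes a proof by contradiction, with the splitting supplied by your initial dichotomy (the weighted splitting theorem of \cite{MW1}) rather than by any rigidity of $u$. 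There are also technical slips: the refined Kato inequality (\ref{K}) needs $\Delta u=0$, whereas your $u$ satisfies only $\Delta u=\langle\nabla f,\nabla u\rangle$; and you assume $\Sigma$ separates $M$ without invoking the homological step ($H_{n-1}(M,\mathbb{Z})=0$ via \cite{MW1} and \cite{CP}) that the paper uses.

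The paper's argument is far more direct and sidesteps the Theorem~\ref{Q} machinery entirely. Rather than an eigenfunction, it uses the potential $f$ itself: from $\Delta_f f=-S-|\nabla f|^2=-1$ one obtains $0\le\int_{M\setminus D}|\nabla\phi|^2 e^{-f}-\int_\Sigma f_\nu\,\phi^2 e^{-f}$ for compactly supported $\phi$. Taking $\phi=|\nabla w|\psi$ with $w$ the $f$-harmonic function on $M\setminus D$ and $\psi$ a cutoff, and using only the plain Bochner inequality $\tfrac12\Delta_f|\nabla w|^2\ge|\nabla|\nabla w||^2$, the interior term integrates to the boundary term $\tfrac12\int_\Sigma(|\nabla w|^2)_\nu e^{-f}$. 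Since $H_\Sigma=0$ and $\Delta_f w=0$ give $\tfrac12(|\nabla w|^2)_\nu=f_\nu|\nabla w|^2$ on $\Sigma$, the two boundary terms cancel exactly; letting $R\to\infty$ forces equality in Bochner on $M\setminus D$, whence the splitting as in \cite{MW1}. No Young inequalities, no exponent optimization, no volume term.
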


\begin{proof}
By the splitting theorem in \cite{MW1} we may assume that $M$ and its
double covers all have one end. Hence, according to Proposition 5.2
in \cite{CP}, the integral homology 
\begin{equation*}
H_{n-1}\left( M,\mathbb{Z}\right) =\left\{ 0\right\}.
\end{equation*}%
In particular, $\Sigma $ bounds a compact domain $D$ in $M.$

In \cite{MW1} it was proved that $\Delta _{f}$ has positive spectrum. Consequently,
$M$ is $f$-nonparabolic. This implies that there exists $w>0$ on $M\setminus D$ such
that 
\begin{eqnarray*}
\Delta _{f}w &=&0\text{ \ on }M\setminus D \\
w &=&1\text{ on }\Sigma \\
\inf_{M\setminus D}w &=&0.
\end{eqnarray*}%
Moreover, 
\begin{equation}
\int_{M\setminus D}\left\vert \nabla w\right\vert ^{2}e^{-f}<\infty.
\label{z2}
\end{equation}%
The Bochner formula implies that 
\begin{equation}
\frac{1}{2}\Delta _{f}\left\vert \nabla w\right\vert ^{2}\geq \left\vert
\nabla \left\vert \nabla w\right\vert \right\vert ^{2}.  \label{z3}
\end{equation}
We now prove, similar to Proposition \ref{P}, that 
\begin{equation}
0\leq \int_{M\setminus D}\left\vert \nabla \phi \right\vert
^{2}e^{-f}-\int_{\Sigma }f_{\nu }\phi ^{2}e^{-f}  \label{z1}
\end{equation}%
for any smooth function $\phi$ on $M\setminus D$ that vanishes at infinity,
where $\nu =\frac{\nabla w}{\left\vert \nabla w\right\vert }$ is the outward unit normal
to $\partial \left(M\setminus D\right)=\Sigma =\left\{ w=1\right\}.$

Note that
\begin{eqnarray*}
\Delta _{f}\left( f\right) &=&\Delta f-\left\vert \nabla f\right\vert ^{2} \\
&=&-S-\left\vert \nabla f\right\vert ^{2} \\
&=&-1.
\end{eqnarray*}%
Therefore,
\begin{eqnarray*}
\int_{M\setminus D}\phi ^{2}e^{-f} &=&-\int_{M\setminus D}\left( \Delta
_{f}\left( f\right) \right) \phi ^{2}e^{-f} \\
&=&\int_{M\setminus D}\left\langle \nabla \phi ^{2},\nabla f\right\rangle
e^{-f}-\int_{\Sigma }f_{\nu }\phi ^{2}e^{-f} \\
&\leq &\int_{M\setminus D}\phi ^{2}e^{-f}+\int_{M\setminus D}\left\vert
\nabla \phi \right\vert ^{2}e^{-f} \\
&&-\int_{\Sigma }f_{\nu }\phi ^{2}e^{-f}.
\end{eqnarray*}%
This proves (\ref{z1}).

Let $\psi $ be a smooth function on $M\setminus D$ with $\psi =1$ on $\Sigma$ 
and $\psi=0$ outside a sufficiently large ball $B_p(R).$
Setting $\phi=\left\vert \nabla w\right\vert \psi $ in (\ref{z1}) we get that 
\begin{eqnarray*}
0 &\leq &\int_{M\setminus D}\left\vert \nabla \left( \left\vert \nabla
w\right\vert \psi \right) \right\vert ^{2}e^{-f}-\int_{\Sigma }f_{\nu
}\left\vert \nabla w\right\vert ^{2}e^{-f} \\
&=&\int_{M\setminus D}\left\vert \nabla \left\vert \nabla w\right\vert
\right\vert ^{2}\psi ^{2}e^{-f}+\frac{1}{2}\int_{M\setminus D}\left\langle
\nabla \left\vert \nabla w\right\vert ^{2},\nabla \psi ^{2}\right\rangle e^{-f}\\
&&+\int_{M\setminus D}\left\vert \nabla \psi \right\vert ^{2}\left\vert
\nabla w\right\vert ^{2}e^{-f}-\int_{\Sigma }f_{\nu }\left\vert \nabla
w\right\vert ^{2}e^{-f}.
\end{eqnarray*}%
By (\ref{z3}), this yields 
\begin{eqnarray}
0 &\leq &\frac{1}{2}\int_{M\setminus D}\left( \Delta _{f}\left\vert \nabla
w\right\vert ^{2}\right) \psi ^{2}+\frac{1}{2}\int_{M\setminus
D}\left\langle \nabla \left\vert \nabla w\right\vert ^{2},\nabla \psi
^{2}\right\rangle  e^{-f}\label{z5} \\
&&+\int_{M\setminus D}\left\vert \nabla \psi \right\vert ^{2}\left\vert
\nabla w\right\vert ^{2}e^{-f}-\int_{\Sigma }f_{\nu }\left\vert \nabla
w\right\vert ^{2}e^{-f}  \notag \\
&=&\frac{1}{2}\int_{\Sigma }\left( \left\vert \nabla w\right\vert
^{2}\right) _{\nu }e^{-f}+\int_{M\setminus D}\left\vert \nabla \psi
\right\vert ^{2}\left\vert \nabla w\right\vert ^{2}e^{-f}  \notag \\
&&-\int_{\Sigma }f_{\nu }\left\vert \nabla w\right\vert ^{2}e^{-f}.  \notag
\end{eqnarray}%
However, as $\nu =\frac{\nabla w}{\left\vert \nabla w\right\vert }$ and $%
H_{\Sigma }=0,$ we see that 
\begin{eqnarray*}
\frac{1}{2}\left( \left\vert \nabla w\right\vert ^{2}\right) _{\nu }
&=&\left\langle \nabla \left\vert \nabla w\right\vert ,\nabla w\right\rangle
\\
&=&\left( \Delta w\right) \left\vert \nabla w\right\vert \\
&=&\left\langle \nabla f,\nabla w\right\rangle \left\vert \nabla w\right\vert
\\
&=&f_{\nu }\left\vert \nabla w\right\vert ^{2}.
\end{eqnarray*}%
Hence, (\ref{z5}) becomes an equality by letting $R\to \infty,$ which in turn forces (\ref{z3}) to be an
equality. This implies the splitting of the manifold as a direct product $\mathbb{R}\times \Sigma.$
We refer to \cite{MW1} for the details.
\end{proof}

An analogous result for expanding Ricci solitons holds true as well.
Recall that an expanding Ricci soliton $\left( M,g,f\right) $ satisfies the
equation 
\begin{equation*}
\mathrm{Ric}+\mathrm{Hess}\left( f\right) =-\frac{1}{2}g.
\end{equation*}%
We may normalize $f$ (see \cite{H}) such that 
\begin{equation*}
S+\left\vert \nabla f\right\vert ^{2}=-f.
\end{equation*}%
Moreover, the scalar curvature $S\geq -\frac{n}{2}$ on $M$ by \cite{PRS}.

\begin{theorem}
\label{Expander}Let $\left( M^{n},g,f\right) $ be an expanding gradient Ricci soliton
with $S\geq -\frac{n-1}{2}$ on $M.$ Assume that there exists an embedded compact minimal
hypersurface $\Sigma $ in $M.$ Then $M$ splits isometrically as a direct
product $\mathbb{R}\times \Sigma.$
\end{theorem}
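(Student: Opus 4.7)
The plan is to adapt the proof of Theorem \ref{Steady} to the expanding setting, with the scalar curvature hypothesis $S\geq -(n-1)/2$ compensating for the $-\tfrac{1}{2}g$ on the right-hand side of the soliton equation. First I would verify that $\Sigma$ bounds a compact domain $D\subset M$: combining an analog of the one-end splitting result of \cite{MW1} with the topological input of \cite{CP} should give $H_{n-1}(M,\mathbb{Z})=\{0\}$, hence such a $D$ exists. Assuming positivity of the spectrum of $\Delta_f$ in the expander case, one then produces a positive $f$-harmonic function $w$ on $M\setminus D$ with $w=1$ on $\Sigma$, $\inf w=0$, and $\int_{M\setminus D}|\nabla w|^2 e^{-f}<\infty$.

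Next I would establish the analog of the weighted Poincar\'e inequality (\ref{z1}), now with a positive coefficient on the left. Tracing the soliton equation gives $\Delta f = -S - n/2$, which combined with the normalization $|\nabla f|^2 = -f - S$ yields $\Delta_f f = f - n/2$. Hence
\begin{equation*}
-\Delta_f f \;=\; \frac{n}{2}-f \;=\; \frac{n}{2}+S+|\nabla f|^2 \;\geq\; \frac{1}{2}+|\nabla f|^2
\end{equation*}
precisely when $S\geq -(n-1)/2$. Integrating $\phi^2(-\Delta_f f)\,e^{-f}$ by parts over $M\setminus D$ and controlling the cross term by Young's inequality $2\phi|\nabla\phi||\nabla f|\leq |\nabla\phi|^2+\phi^2|\nabla f|^2$ then produces
\begin{equation*}
\tfrac{1}{2}\int_{M\setminus D}\phi^2 e^{-f} \;\leq\; \int_{M\setminus D}|\nabla\phi|^2 e^{-f} - \int_\Sigma f_\nu \phi^2 e^{-f}
\end{equation*}
for any $\phi$ vanishing at infinity, where $\nu=\nabla w/|\nabla w|$ is the outward unit normal to $M\setminus D$.

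The core computation mirrors (\ref{z5}): test this Poincar\'e inequality with $\phi=|\nabla w|\psi$ for a cutoff $\psi$ equal to $1$ on $\Sigma$ and vanishing outside $B_p(R)$. Since $\mathrm{Ric}_f = -\tfrac{1}{2}g$ for an expander, the Bochner formula combined with Kato reads
\begin{equation*}
\tfrac{1}{2}\Delta_f|\nabla w|^2 \;\geq\; \bigl|\nabla|\nabla w|\bigr|^2 - \tfrac{1}{2}|\nabla w|^2,
\end{equation*}
and the key observation is that the $-\tfrac{1}{2}|\nabla w|^2$ deficit is precisely what the new $\tfrac{1}{2}\phi^2$ term on the left of the Poincar\'e inequality absorbs. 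Carrying out the integration by parts as in (\ref{z5}), and using $H_\Sigma=0$ together with $\Delta w=\langle \nabla f,\nabla w\rangle$ on $\Sigma$ to recognize $\tfrac{1}{2}(|\nabla w|^2)_\nu = f_\nu|\nabla w|^2$, the boundary contributions cancel and one is left with $0\leq \int|\nabla\psi|^2|\nabla w|^2 e^{-f}$. Letting $R\to\infty$ forces equality throughout, and the splitting $M=\mathbb{R}\times\Sigma$ then follows from the rigidity argument recorded in \cite{MW1}. The main obstacle I anticipate is the initial topological step, namely checking that the \cite{MW1} one-end theorem extends to expanders so that $\Sigma$ indeed bounds a compact domain; the rest of the argument is a faithful translation of the steady case in which the numerology happens to align.
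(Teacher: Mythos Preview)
Your proposal is correct and follows essentially the same route as the paper's proof: the weighted Poincar\'e inequality with coefficient $\tfrac{1}{2}$, the Bochner formula with the $-\tfrac{1}{2}|\nabla w|^2$ deficit, and the boundary cancellation via $H_\Sigma=0$ all appear exactly as you describe. The only correction is that the one-end/positive-spectrum results for expanders and the final rigidity step are drawn from \cite{MW2} rather than \cite{MW1}, so the obstacle you anticipated is already resolved in the literature.
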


\begin{proof}
Recall that by \cite{MW2} such an expanding Ricci soliton must have one end
or it splits as a direct product. Hence, by \cite{CP} we may assume as
before that the integral homology
\begin{equation*}
H_{n-1}\left( M,\mathbb{Z}\right) =\left\{ 0\right\}
\end{equation*}%
and that $\Sigma $ bounds a compact domain $D$ in $M.$

As $\Delta _{f}$ has positive spectrum \cite{MW2}, $M$ is $f$-nonparabolic. 
In particular, there exists function $w>0$ on $M\setminus D$ such that 
\begin{eqnarray*}
\Delta _{f}w &=&0\text{ \ on }M\setminus D \\
w &=&1\text{ on }\Sigma \\
\inf_{M\setminus D}w &=&0.
\end{eqnarray*}%
Moreover, 
\begin{equation*}
\int_{M\setminus D}\left\vert \nabla w\right\vert
^{2}e^{-f}<\infty.
\end{equation*}

We now prove, similar to Proposition \ref{P}, that 
\begin{equation}
\frac{1}{2}\int_{M\setminus D}\phi ^{2}e^{-f}\leq \int_{M\setminus
D}\left\vert \nabla \phi \right\vert ^{2}e^{-f}-\int_{\Sigma }f_{\nu }\phi
^{2}e^{-f} \label{z6}
\end{equation}%
for any smooth function $\phi$ on $M\setminus D$ that vanishes near infinity,
where $\nu =\frac{\nabla w}{\left\vert \nabla w\right\vert }$ is the unit normal
to $\Sigma =\left\{ w=1\right\}.$ 
Direct calculation gives
\begin{eqnarray*}
\Delta _{f}\left( f\right) &=&\Delta f-\left\vert \nabla f\right\vert ^{2} \\
&=&-\frac{n}{2}-S-\left\vert \nabla f\right\vert ^{2} \\
&\leq &-\frac{1}{2}-\left\vert \nabla f\right\vert ^{2}.
\end{eqnarray*}%
So we have
\begin{eqnarray*}
\frac{1}{2}\int_{M\setminus D}\phi ^{2}e^{-f} &\leq&-\int_{M\setminus D}\left(
\Delta _{f}\left( f\right) \right) \phi ^{2}e^{-f}-\int_{M\setminus
D}\left\vert \nabla f\right\vert ^{2}\phi ^{2}e^{-f} \\
&=&\int_{M\setminus D}\left\langle \nabla \phi ^{2},\nabla f\right\rangle
e^{-f}-\int_{\Sigma }f_{\nu }\phi ^{2}e^{-f}-\int_{M\setminus D}\left\vert
\nabla f\right\vert ^{2}\phi ^{2}e^{-f} \\
&\leq &\int_{M\setminus D}\left\vert \nabla \phi \right\vert
^{2}e^{-f}-\int_{\Sigma }f_{\nu }\phi^2 e^{-f}.
\end{eqnarray*}%
This proves (\ref{z6}). We apply (\ref{z6}) to $\phi =\left\vert \nabla
w\right\vert \psi,$ where $\psi $ is a cut-off function as in 
Theorem \ref{Steady} that $\psi=1$ on $\Sigma$ and that $\psi=0$ outside the geodesic ball
$B_p(R)$ when $R$ is large. It follows that 
\begin{eqnarray}
\frac{1}{2}\int_{M\setminus D}\left\vert \nabla w\right\vert ^{2}\psi
^{2}e^{-f} &\leq &\int_{M\setminus D}\left\vert \nabla \left\vert \nabla
w\right\vert \right\vert ^{2}\psi ^{2}e^{-f}  \label{z7} \\
&&+\int_{M\setminus D}\left\vert \nabla w\right\vert ^{2}\left\vert \nabla
\psi \right\vert ^{2}e^{-f}  \notag \\
&&+\frac{1}{2}\int_{M\setminus D}\left\langle \nabla \left\vert \nabla
w\right\vert ^{2},\nabla \psi ^{2}\right\rangle e^{-f}  \notag \\
&&-\int_{\Sigma }f_{\nu } \vert \nabla w\vert ^2  e^{-f}.  \notag
\end{eqnarray}%
Recall the Bochner formula 
\begin{equation*}
\frac{1}{2}\Delta _{f}\left\vert \nabla w\right\vert ^{2}\geq \left\vert
\nabla \left\vert \nabla w\right\vert \right\vert ^{2}-\frac{1}{2}\left\vert
\nabla w\right\vert ^{2}\text{ \ on }M\setminus D.
\end{equation*}%
Plugging into (\ref{z7}) yields that

\begin{eqnarray*}
\frac{1}{2}\int_{M\setminus D}\left\vert \nabla w\right\vert ^{2}\psi
^{2}e^{-f} &\leq &\frac{1}{2}\int_{M\setminus D}\left( \Delta _{f}\left\vert
\nabla w\right\vert ^{2}\right) \psi ^{2}e^{-f}+\frac{1}{2}\int_{M\setminus
D}\left\vert \nabla w\right\vert ^{2}\psi ^{2}e^{-f} \\
&&+\int_{M\setminus D}\left\vert \nabla w\right\vert ^{2}\left\vert \nabla
\psi \right\vert ^{2}e^{-f}+\frac{1}{2}\int_{M\setminus D}\left\langle
\nabla \left\vert \nabla w\right\vert ^{2},\nabla \psi ^{2}\right\rangle
e^{-f} \\
&&-\int_{\Sigma }f_{\nu }\left\vert \nabla w\right\vert ^{2}e^{-f} \\
&=&\frac{1}{2}\int_{\Sigma }\left( \left\vert \nabla w\right\vert
^{2}\right) _{\nu }e^{-f}+\frac{1}{2}\int_{M\setminus D}\left\vert
\nabla w\right\vert ^{2}\psi ^{2}e^{-f} \\
&&+\int_{M\setminus D}\left\vert \nabla w\right\vert ^{2}\left\vert \nabla
\psi \right\vert ^{2}e^{-f}-\int_{\Sigma }f_{\nu }\left\vert \nabla
w\right\vert ^{2}e^{-f}.
\end{eqnarray*}
In conclusion, 
\begin{eqnarray*}
0 &\leq &\frac{1}{2}\int_{\Sigma }\left( \left\vert \nabla w\right\vert
^{2}\right) _{\nu }e^{-f}-\int_{\Sigma }f_{\nu }\left\vert \nabla
w\right\vert ^{2}e^{-f} \\
&&+\int_{M\setminus D}\left\vert \nabla w\right\vert ^{2}\left\vert \nabla
\psi \right\vert ^{2}e^{-f}.
\end{eqnarray*}%
Making $R\rightarrow \infty,$ we get
\begin{equation*}
0\leq \frac{1}{2}\int_{\Sigma }\left( \left\vert \nabla w\right\vert
^{2}\right) _{\nu }e^{-f}-\int_{\Sigma }f_{\nu }\left\vert \nabla
w\right\vert ^{2}e^{-f}.
\end{equation*}%
Since $\nu =\frac{\nabla w}{\left\vert \nabla w\right\vert },$ it follows
that 
\begin{equation*}
0\leq \int_{\Sigma }\left\langle \nabla \left\vert \nabla w\right\vert
,\nabla w\right\rangle e^{-f}-\int_{\Sigma }\left\langle \nabla f,\nabla
w\right\rangle \left\vert \nabla w\right\vert e^{-f}.
\end{equation*}%
However, as $w$ is $f$-harmonic, 
\begin{equation*}
\left\langle \nabla \left\vert \nabla w\right\vert ,\nabla w\right\rangle
-\left\langle \nabla f,\nabla w\right\rangle \left\vert \nabla w\right\vert
=-H_{\Sigma }\left\vert \nabla w\right\vert ^{2}.
\end{equation*}%
Since $\Sigma $ is minimal, this again shows the above inequality must be equality,
which in turn forces the Bochner formula itself is also an equality. This suffices
to conclude that $M=\mathbb{R}\times \Sigma.$ One may refer to \cite{MW2} for details.
\end{proof}

\textbf{Acknowledgment:} We wish to thank Pengfei Guan for his interest and  comments. The first author was
partially supported by the NSF grant DMS-1811845 and by a Simons Foundation grant.

\end{document}